\documentclass[11pt, twoside]{article}
\usepackage{latexsym}
\usepackage{amsmath}
\usepackage{amssymb}
\usepackage[all]{xy}
\usepackage{amsfonts}
\usepackage{verbatim}
\usepackage{amsthm}
\usepackage{mathrsfs}
\usepackage{epsfig}
\usepackage{xy}
\usepackage{array}
\usepackage{stmaryrd}
\usepackage{graphicx,color}
\usepackage{xcolor}
\usepackage{numname}
\usepackage[colorlinks=true,linkcolor=blue,citecolor=blue]{hyperref}
\usepackage{tikz}
\usetikzlibrary{arrows,calc}
\usepackage{etex}
\usepackage{mathdots}
\usepackage{float}
\usepackage{graphics}
\usepackage{pdflscape}
\usepackage{tikz-cd}
\usepackage[T1]{fontenc} %Name
\usepackage{lmodern} %Name

\usepackage{anysize,hyperref}
\input xypic
\xyoption{all}
\usepackage{enumitem}
\usepackage[perpage,symbol]{footmisc}
\usepackage{setspace}
\def\P{\mathcal{P}}
\def\I{\mathcal{I}}
\def\J{\mathcal{J}}

\def\C{\mathcal{C}}
\def\X{\mathcal{X}}
\def\Y{\mathcal{Y}}
\def\E{\mathbb{E}}

\def\S{\mathcal{S}}
\def\T{\mathcal{T}}

\def\s{\mathfrak{s}}

\def\Ab{\mathit{Ab}}

\def\dr{\ar@{->}[r]}

\def\ker{\mathsf{Ker}\hspace{.01in}}

\def\im{\mathrm{Im}\hspace{.01in}}

\def\Hom{\mbox{Hom}}
\newcommand{\ra}{\rightarrow}
\newcommand{\lra}{\longrightarrow}
\newcommand{\dra}{\dashrightarrow}

\newcommand{\Cone}{\operatorname{Cone}\nolimits}
\newcommand{\CoCone}{\operatorname{CoCone}\nolimits}

\newcommand{\mv}[2]{\left(\begin{smallmatrix}
		#1 \\
		#2
	\end{smallmatrix}\right)}

\usepackage{bm}
\begin{document}
	\baselineskip=15pt
	\title{\Large{\bf Ideal $n$-cotorsion pairs in Frobenius extriangulated categories}
		\footnotetext{Panyue Zhou is supported by the National Natural Science Foundation of China (Grant No. 12371034) and by the Scientific Research Fund of Hunan Provincial Education Department (Grant No. 24A0221).}}
	\medskip
	\author{Yixia Zhang and Panyue Zhou}
	
	\date{}
	
	\maketitle
	\def\blue{\color{blue}}
	\def\red{\color{red}}

	\newtheorem{theorem}{Theorem}[section]
	\newtheorem{lemma}[theorem]{Lemma}
	\newtheorem{corollary}[theorem]{Corollary}
	\newtheorem{proposition}[theorem]{Proposition}
	\newtheorem{conjecture}{Conjecture}
	\theoremstyle{definition}
	\newtheorem{definition}[theorem]{Definition}
	\newtheorem{question}[theorem]{Question}
	\newtheorem{remark}[theorem]{Remark}
	\newtheorem{remark*}[]{Remark}
	\newtheorem{example}[theorem]{Example}
	\newtheorem{example*}[]{Example}
	\newtheorem{condition}[theorem]{Condition}
	\newtheorem{condition*}[]{Condition}
	\newtheorem{construction}[theorem]{Construction}
	\newtheorem{construction*}[]{Construction}
	\newtheorem{fact}[theorem]{Fact}
	\newtheorem{assumption}[theorem]{Assumption}
	\newtheorem{assumption*}[]{Assumption}	

	\baselineskip=17pt
	\parindent=0.5cm
	\vspace{-6mm}

	\begin{abstract}
\begin{spacing}{1.2}
Motivated by the correspondence between ideal cotorsion pairs in Frobenius exact categories and those in their stable categories, we introduce the notion of an ideal $n$-cotorsion pair in an extriangulated category. We study the relationship between ideal $n$-cotorsion pairs in a Frobenius extriangulated category $\mathcal C$ and those in its stable category $\underline{\mathcal C}=\mathcal C/\omega$. Our main result shows that $(\mathcal I,\mathcal J)$ is an ideal $n$-cotorsion pair in $\mathcal C$ if and only if $(\mathcal I/\omega,\mathcal J/\omega)$ is an ideal $n$-cotorsion pair in $\underline{\mathcal C}$. This provides a bridge between higher ideal approximation theory in Frobenius extriangulated categories and its counterpart in their stable categories. Additionally, in Krull--Schmidt exact categories, we establish a bijective correspondence between complete cotorsion pairs and complete ideal cotorsion pairs, answering a question of Fu, Guil Asensio, Herzog and Torrecillas.
	
\vspace{2mm}

\textbf{\hspace{-5.5mm}Keywords:} extriangulated category; approximation theory; ideal cotorsion pair;
$n$-cotorsion pair; stable category\\[0.1cm]
		\textbf{ 2020 Mathematics Subject Classification:} 18G15; 18G25; 18G80; 18E40\end{spacing}
	\end{abstract}

\section{Introduction}

Approximation theory is a fundamental tool in homological algebra and representation theory. Its basic idea is to study complicated objects by means of relatively simple ones. It has been widely used to construct resolutions, define derived functors and investigate homological dimensions in module categories, abelian categories and exact categories. The notion of cotorsion pairs, introduced by Salce \cite{S} in the category of abelian groups, provides a central framework for classical approximation theory. It was later developed in various settings, including sheaf categories \cite{EERO}, Grothendieck categories \cite{G,M} and exact categories \cite{SS}. In particular, complete cotorsion pairs connect precovers, preenvelopes and special approximations in a unified way.

Classical approximation theory is mainly formulated in terms of objects and subcategories. However, many approximation phenomena are naturally governed by morphisms rather than by objects alone. To address this point, Fu, Guil Asensio, Herzog and Torrecillas \cite{FAHT} introduced ideal approximation theory, replacing subcategories by ideals of morphisms and developing the notion of ideal cotorsion pairs. This framework extends classical approximation theory to the morphism level and establishes a close relation between special precovering ideals and complete ideal cotorsion pairs in exact categories. Breaz and Modoi \cite{BM} further developed ideal approximation theory in triangulated categories. One striking feature of the triangulated setting is that every precovering or preenveloping ideal is automatically special, while this is no longer true in general exact categories. This contrast indicates that ideal approximation behaves differently in exact and triangulated contexts, and therefore motivates the study of intermediate and unifying settings.

Frobenius exact categories provide such a bridge. They are exact categories with enough projective and enough injective objects, and the projective objects coincide with the injective objects. If $\omega$ denotes the full subcategory of projective-injective objects, then the stable category $\mathcal C/\omega$ carries a canonical triangulated structure. Thus it is natural to compare ideal approximation theory in a Frobenius category with that in its stable category. Recently, Sun, Tan, Wang and Zhu \cite{STWZ} proved that, under suitable assumptions on ideals containing projective objects, the precovering property, the special precovering property and the completeness of ideal cotorsion pairs in a Frobenius category are equivalent to the corresponding properties in its stable category.
On the other hand, higher homological methods have led to higher versions of cotorsion theory. Mao and Ding \cite{MD} studied cotorsion dimension and higher-order orthogonality via the functor ${\rm Ext}^{n+1}(-,-)$. Further developments include Gorenstein $n$-cotorsion modules, Gorenstein $n$-flat covers and $n$-cotorsion envelopes over $n$-coherent rings \cite{HMP}.
The notion of $n$-cotorsion pairs was introduced in \cite{HMP} as a higher analogue of complete cotorsion pairs, motivated by approximation phenomena in Gorenstein homological algebra. Its basic idea is to replace the usual first-extension orthogonality by higher extension vanishing up to degree $n$, together with suitable finite resolution or coresolution conditions. This framework recovers complete cotorsion pairs when $n=1$, and connects higher approximation theory with Gorenstein modules, chain complexes, and cluster tilting theory \cite{HMP}. It was later extended to extriangulated categories, where $n$-cotorsion pairs of the form $(\mathcal X,\mathcal X)$ correspond to $(n+1)$-cluster tilting subcategories \cite{HZ}. More recent developments study mutation of $n$-cotorsion pairs in triangulated and extriangulated settings, showing that such pairs are stable under mutation and admit geometric interpretations in higher cluster categories \cite{CZ,CLZ}.

Extriangulated categories, introduced as a common generalization of exact categories and triangulated categories, provide a natural framework in which both ideal approximation theory and higher cotorsion theory can be considered simultaneously. Motivated by the above developments, we introduce the notion of an ideal $n$-cotorsion pair in an extriangulated category. This notion combines the morphism-level viewpoint of ideal cotorsion pairs with the higher orthogonality underlying $n$-cotorsion pairs.
The main purpose of this paper is to compare ideal $n$-cotorsion pairs in a Frobenius extriangulated category with those in its stable category. Let $\mathcal C$ be a Frobenius extriangulated category, and let $\omega$ be the full subcategory of projective-injective objects. We write $\underline{\mathcal C}=\mathcal C/\omega$ for the stable category. For ideals $\mathcal I$ and $\mathcal J$ of $\mathcal C$, we denote by $\mathcal I/\omega$ and $\mathcal J/\omega$ their induced ideals in $\underline{\mathcal C}$. Our main result states that $(\mathcal I,\mathcal J)$ is an ideal $n$-cotorsion pair in $\mathcal C$ if and only if $(\mathcal I/\omega,\mathcal J/\omega)$ is an ideal $n$-cotorsion pair in $\underline{\mathcal C}$.
This theorem shows that the higher ideal cotorsion structure of a Frobenius extriangulated category can be detected after passing to its stable category. In particular, when $\mathcal C$ is a Frobenius exact category and $n=1$, our result recovers the correspondence studied in \cite{STWZ}. Thus the present work extends the relation between ideal approximation theory in Frobenius categories and stable categories to the higher ideal setting.

Inspired by the work of Fu, Guil Asensio, Herzog and Torrecillas \cite{FAHT}, we study the relationship between complete cotorsion pairs and complete ideal cotorsion pairs in extriangulated categories. In \cite[Question 28]{FAHT}, the authors asked whether a complete ideal cotorsion pair whose two ideals are object ideals necessarily arises from a complete cotorsion pair. We provide a positive answer to this question in the Krull-Schmidt setting. Specifically, we prove two main results: Theorem~\ref{main11} shows that every complete cotorsion pair $(\mathcal{X},\mathcal{Y})$ in an extriangulated category gives rise to a complete ideal cotorsion pair $([\mathcal{X}],[\mathcal{Y}])$; conversely, Theorem~\ref{main12} establishes that under the Krull-Schmidt assumption, every complete ideal cotorsion pair of the form $([\mathcal{X}],[\mathcal{Y}])$ comes from a complete cotorsion pair $(\mathcal{X},\mathcal{Y})$. As a consequence, in Krull--Schmidt exact categories, we obtain an affirmative answer to Question~28 of \cite{FAHT}. A related result for Frobenius exact categories was given in \cite[Corollary 1.3]{STWZ}; however, our approach is direct and does not rely on stable category techniques, thus applying to arbitrary Krull--Schmidt exact categories.

The paper is organized as follows. In Section 2,
 we recall the definition of extriangulated categories and some basic facts.
  In Section 3, we answer a question of Fu, Guil Asensio, Herzog and Torrecillas \cite[Question 28]{FAHT} by establishing, in the Krull-Schmidt setting, a bijective correspondence between complete cotorsion pairs and complete ideal cotorsion pairs (see Theorems~\ref{main11} and~\ref{main12}).
 In Section 4, we introduce ideal $n$-cotorsion pairs in extriangulated categories, establish their elementary properties, prove the main correspondence theorem (see Theorem \ref{main result of ideal n-cotorsion pair}), and give some applications.

\section{Preliminaries}
\setcounter{equation}{0}

In this section,  we omit the detailed axioms and definitions on extriangulated categories, and only give terminologies and properties which we shall need later. For the precise definition and the detailed properties of extriangulated categories, we refer to \cite[Section 2]{NP}.

Let $\mathcal{C}$ be an additive category.  For objects $X,Y \in \mathcal{C}$, we denote by $\C(X,Y)$ or ${\rm Hom}_{\C}(X,Y)$ the set of morphisms from $X$ to $Y$ in $\mathcal{C}$.

\subsection{Extriangulated categories}

We briefly recall the definition and some basic properties of extriangulated categories from \cite{NP}.
Let $\mathcal{C}$ be an additive category equipped with an additive bifunctor
$$
\mathbb{E}\colon \mathcal{C}^{\rm op}\times \mathcal{C}\longrightarrow \mathrm{Ab},
$$
where $\mathrm{Ab}$ denotes the category of abelian groups. For objects $A,C\in\mathcal{C}$, an element $\delta\in \mathbb{E}(C,A)$ is called an \emph{$\mathbb{E}$-extension}.

The equivalence classes $[A \overset{x}{\lra} B \overset{y}{\lra} C]=[A \overset{x^{\prime}}{\lra} B^{\prime} \overset{y^{\prime}}{\lra} C]$ means that
there exists an isomorphism $b\in\C(B,B^{\prime})$ which makes the following diagram commutative.
\[
\xy
(-16,0)*+{A}="0";
(3,0)*+{}="1";
(0,8)*+{B}="2";
(0,-8)*+{B^{\prime}}="4";
(-3,0)*+{}="5";
(16,0)*+{C}="6";
{\ar^{x} "0";"2"};
{\ar^{y} "2";"6"};
{\ar_{x^{\prime}} "0";"4"};
{\ar_{y^{\prime}} "4";"6"};
{\ar^{b}_{\cong} "2";"4"};
\endxy
\]	

A \emph{realization} of $\mathbb{E}$ is a correspondence $\mathfrak{s}$ that assigns to each $\mathbb{E}$-extension $\delta\in \mathbb{E}(C,A)$ an equivalence class
$$
\mathfrak{s}(\delta)=\xymatrix@C=0.8cm{[A\ar[r]^x & B\ar[r]^y & C]}
$$
of composable morphisms in $\mathcal{C}$, and which satisfies the compatibility conditions described in \cite[Definition~2.9]{NP}.
The triple $(\mathcal{C},\mathbb{E},\mathfrak{s})$ is called an \emph{extriangulated category} if the following conditions are satisfied:
\begin{itemize}
	\item[(1)] $\mathbb{E}\colon \mathcal{C}^{\rm op}\times \mathcal{C}\rightarrow \mathrm{Ab}$ is an additive bifunctor;
	\item[(2)] $\mathfrak{s}$ is an additive realization of $\mathbb{E}$;
	\item[(3)] the pair $(\mathbb{E},\mathfrak{s})$ satisfies the axioms (ET3), (ET3)$^{\rm op}$, (ET4), and (ET4)$^{\rm op}$ in \cite[Definition~2.12]{NP}.
\end{itemize}

In what follows, we often write $\C$ for an extriangulated category $(\C, \mathbb{E}, \mathfrak{s})$ for simplicity.
	
\begin{remark}
	Let $\delta\in \mathbb{E}(C,A)$ be an $\mathbb{E}$-extension. By the functoriality of $\mathbb{E}$, for any morphisms $a\in \mathcal{C}(A,A')$ and $c\in \mathcal{C}(C',C)$, we obtain induced $\mathbb{E}$-extensions
	$$
	\mathbb{E}(C,a)(\delta)\in \mathbb{E}(C,A') \quad \text{and} \quad \mathbb{E}(c,A)(\delta)\in \mathbb{E}(C',A).
	$$
	For simplicity, we denote these extensions by $a_{*}\delta$ and $c^{*}\delta$, respectively.
\end{remark}

\begin{definition}{\rm \cite[Definition 3.1]{NP}}\label{DefYoneda}
	Assume that $(\C,\E,\s)$ be an extriangulated category.
	By Yoneda's lemma, any $\mathbb{E}$-extension $\delta\in\mathbb{E}(C,A)$ induces natural transformations
	\[ \delta_\sharp\colon\C(-,C)\rightarrow\mathbb{E}(-,A)\ \ \text{and}\ \ \delta^\sharp\colon\C(A,-)\rightarrow\mathbb{E}(C,-). \]
	For any $X\in\C$, these $\delta_\sharp^{X}$ and $\delta^\sharp_X$ are given as follows.
	\begin{enumerate}
		\item[(1)] $\delta_\sharp^{X}\colon\C(X,C)\to\mathbb{E}(X,A)\ ;\ f\mapsto f^{\ast}\delta$.
		\item[(2)] $\delta^\sharp_X\colon\C(A,X)\to\mathbb{E}(C,X)\ ;\ g\mapsto g_{\ast}\delta$.
	\end{enumerate}	
\end{definition}

\begin{definition}{\rm \cite[Definition 2.15]{NP}}\label{DefTermExact1}
	Let $(\C,\mathbb{E},\mathfrak{s})$ be an extriangulated category.
	\begin{enumerate}
		\item[(1)] A sequence $A\overset{x}{\longrightarrow}B\overset{y}{\longrightarrow}C$ is called an {\it conflation} if it realizes some $\mathbb{E}$-extension $\delta\in\mathbb{E}(C,A)$.
		\item[(2)] A morphism $x\in\C(A,B)$ is called an {\it inflation} if it admits some conflation $A\overset{x}{\ra}B\ra C$.
		\item[(3)] A morphism $x\in\C(A,B)$ is called an {\it deflation} if it admits some conflation $K\ra A\overset{x}{\ra}B$.
	\end{enumerate}	
\end{definition}

\begin{definition} \cite[Definition 2.19]{NP}
	If a conflation $A\overset{x}{\longrightarrow}B\overset{y}{\longrightarrow}C$ realize $\delta \in \E(C,A)$, then we write it
	$$A\overset{x}{\longrightarrow}B\overset{y}{\longrightarrow}C\stackrel{\delta}{\dra}$$
	and call it an $\E$-triangle. If a triplet $(a,b,c)$ realizes $(a,c):\delta \ra \delta^{\prime}$, then we write it as
	$$\xymatrix{A \ar[r]^{x} \ar[d]^{a} &B \ar[r]^{y} \ar[d]^{b} &C \ar@{-->}[r]^{\delta} \ar[d]^{c} &\\
		A^{\prime} \ar[r]^{x^{\prime}} &B^{\prime} \ar[r]^{y^{\prime}}  &C^{\prime} \ar@{-->}[r]^{\delta^{\prime}} &}$$
	and call it a morphism of $\E$-triangles.
\end{definition}

It follows directly from the definition that we have the following decomposition for morphisms between $\mathbb{E}$-triangles.

\begin{remark} \label{factorization of E-extensions}
	The morphism of $\E$-triangles factors through an $\E$-triangle. Let $(a,b,c)$ be a morphism between $A \stackrel{x}{\lra} B \stackrel{y}{\lra} C \stackrel{\delta}{\dra}$ and $ A^{\prime} \stackrel{x^{\prime}}{\lra} B^{\prime} \stackrel{y^{\prime}}{\lra} C^{\prime} \stackrel{\delta^{\prime}}{\dra}$. Then there is the following commutative diagram
	$$\xymatrix{A \ar[r]^{x} \ar[d]^{a} &B \ar[r]^{y} \ar[d]^{b^{\prime}} &C \ar@{-->}[r]^{\delta} \ar@{=}[d] &\\
		A^{\prime} \ar[r]^{l} \ar@{=}[d] &D \ar[r]^{m} \ar[d]^{b^{\prime\prime}} &C \ar@{-->}[r]^{a_{*}\delta}_{c^{*}\delta^{\prime}} \ar[d]^{c} &\\
		A^{\prime} \ar[r]^{x^{\prime}} &B^{\prime} \ar[r]^{y^{\prime}} &C^{\prime} \ar@{-->}[r]^{\delta^{\prime}} &,}$$
	where $b=b^{\prime\prime}b^{\prime}$ and $a_{*}\delta=c^{*}\delta^{\prime}$.
\end{remark}

\begin{proposition}
	Let $i\colon A \ra B,\ j\colon X \ra Y$ be morphisms in $\C$. Then $\E(i,j)=\E(A,j)\E(i,X)=\E(i,Y)\E(B,j)$. That is to say, $j_{*}i^{*}\delta=i^{*}j_{*}\delta$ for any $\delta \in \E(B,X)$.	
\end{proposition}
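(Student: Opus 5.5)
This is a formal consequence of the bifunctoriality of $\E\colon \C^{\rm op}\times\C\to \Ab$, so the plan is to unwind that statement. Nothing specific to the extriangulated structure (realization, (ET3), (ET4)) is needed.

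First fix the conventions. For $i\colon A\to B$ in $\C$ (so $i$ is a morphism $B\to A$ in $\C^{\rm op}$) and $j\colon X\to Y$, the morphism $(i,j)$ of $\C^{\rm op}\times\C$ goes from $(B,X)$ to $(A,Y)$. Hence $\E(i,j)$ is a homomorphism $\E(B,X)\to\E(A,Y)$. By the notation of the earlier remark, $\E(i,X)=i^{*}$ and $\E(B,j)=j_{*}$.

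The key step is to factor $(i,j)$ in the product category in the two possible ways:
$$(i,j)=(1_A,j)\circ(i,1_X)=(i,1_Y)\circ(1_B,j).$$
The first composite passes through $(A,X)$ and the second through $(B,Y)$. Both equalities hold because composition in $\C^{\rm op}\times\C$ is computed componentwise and identities are units.

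Applying the functor $\E$, which preserves composition, gives
$$\E(i,j)=\E(A,j)\,\E(i,X)=\E(i,Y)\,\E(B,j).$$
Evaluating at $\delta\in\E(B,X)$ and rewriting in the $(-)_{*}$, $(-)^{*}$ notation gives $j_{*}i^{*}\delta=i^{*}j_{*}\delta$.

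No real obstacle is expected. The only step needing care is the bookkeeping of variances: one must check that the composites above are taken in $\C^{\rm op}\times\C$, with the first variable reversed, so that all the domains and codomains match.
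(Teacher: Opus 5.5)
Your argument is correct, and it takes a more direct route than the paper. You use only that $\E$ is a functor on $\C^{\rm op}\times\C$. The two factorizations $(i,j)=(1_A,j)\circ(i,1_X)=(i,1_Y)\circ(1_B,j)$ give the identity immediately, and you handle the variances correctly.

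The paper instead works with realizations. It realizes $\delta$, $i^{*}\delta$ and $j_{*}\delta$ by conflations. It then composes the morphism of $\E$-triangles from $i^{*}\delta$ to $\delta$ (over $(1_X,i)$) with the one from $\delta$ to $j_{*}\delta$ (over $(j,1_B)$). Finally it invokes the factorization remark for morphisms of $\E$-triangles to build a cube of $\E$-triangles, in which a new conflation $Y\to D\to A$ realizes $j_{*}i^{*}\delta=i^{*}j_{*}\delta$.

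The benefit of the paper's route is that it exhibits explicit conflations realizing the common extension. This is the picture used in the next proposition, relating $\E(i,j)=0$ to factorizations.

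The benefit of yours is that it is self-contained. To apply the factorization remark, the paper needs the composite $(j,i)\colon i^{*}\delta\to j_{*}\delta$ to be a morphism of $\E$-extensions. That condition is literally $j_{*}i^{*}\delta=i^{*}j_{*}\delta$. The fact that morphisms of $\E$-extensions compose is itself justified by the interchange law you prove. So your observation that only the bifunctoriality of $\E$ is needed is right, and your proof is the logically prior one.
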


\begin{proof}
	For any $\E$-extension $\delta$ realized by a conflation $X \ra E \ra B$, consider the following morphisms of $\E$-triangles
	$$\xymatrix{X \ar[r] \ar@{=}[d] &E^{\prime} \ar[r] \ar[d] &A \ar@{-->}[r]^{i^{*}\delta} \ar[d]^{i} &\\
		X \ar[r] \ar[d]^{j} &E \ar[r] \ar[d] &B \ar@{-->}[r]^{\delta} \ar@{=}[d] &\\
		Y \ar[r] &E^{\prime\prime} \ar[r] &B \ar@{-->}[r]^{j_{*}\delta} &.}$$
By Remark \ref{factorization of E-extensions}, we obtain the following commutative diagram such that $j_{*}i^{*}\delta=i^{*}j_{*}\delta$.
	$$\xymatrix@C=0.45cm@R=0.45cm{ &X \ar[rr] \ar@{=}[dd] \ar[ld]_{j} & &E^{\prime} \ar[rr] \ar[dd] \ar[ld] & &A \ar@{-->}[rr]^{i^{*}\delta} \ar[dd]^(.2){i} \ar@{=}[ld]& &\\
		Y \ar[rr] \ar@{=}[dd] & &D \ar[rr] \ar[dd] & &A \ar@{-->}[rr] \ar[dd]^(.2){i} &&\\
		&X \ar[rr] \ar[ld]_{j} & & E \ar[rr] \ar[ld] & & B \ar@{-->}[rr]^{\delta} \ar@{=}[ld] & &\\
		Y \ar[rr] & &E^{\prime\prime} \ar[rr] & &B \ar@{-->}[rr]^{j_{*}\delta} &&.}$$	
\end{proof}

\begin{proposition}{\rm \cite[Propositions 3.3]{NP}}\label{FactExact}
	Let $A\overset{x}{\longrightarrow}B\overset{y}{\longrightarrow}C\overset{\delta}{\dashrightarrow}$ be any $\E$-triangle.
	Then the following sequences of natural transformations are exact.
	\[ \C(C,-)\xrightarrow{\C(y,-)}\C(B,-)\xrightarrow{\C(x,-)}\C(A,-)\overset{\ \delta^{\sharp}\ }{\lra}\mathbb{E}(C,-)\xrightarrow{\mathbb{E}(y,-) }\mathbb{E}(B,-); \]
	\[ \C(-,A)\xrightarrow{\C(-,x)}\C(-,B)\xrightarrow{\C(-,y)}\C(-,C)\overset{\ \delta_{\sharp}\ }{\lra}\mathbb{E}(-,A)\xrightarrow{\mathbb{E}(-,x) }\mathbb{E}(-,B). \]	
\end{proposition}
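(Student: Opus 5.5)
The plan is to prove exactness of the first (covariant) sequence term by term, using only the realization axioms and (ET3), (ET3)$^{\rm op}$. The second sequence then follows by the dual argument, since the axioms are self-dual. Throughout, for an object $X$ let $\mathfrak{s}(0)=[X\overset{1}{\to}X\to 0]$ and $[0\to X\overset{1}{\to}X]$ denote the split realizations of the zero extensions in $\mathbb{E}(0,X)$ and $\mathbb{E}(X,0)$.

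\emph{Exactness at $\C(B,X)$.} To get $yx=0$, I consider the pair $(1_A,0)$ from the zero extension in $\mathbb{E}(0,A)$, realized by $A\overset{1}{\to}A\to 0$, to $\delta$. It is a morphism of extensions because $(1_A)_*0=0=0^*\delta$. Applying (ET3) to the left square $x\circ 1_A=x\circ 1_A$ gives a morphism of triangles, and commutativity of the right square gives $y x=0$. Conversely, let $f\colon B\to X$ satisfy $fx=0$. I apply (ET3) to the $\E$-triangle $A\overset{x}{\to}B\overset{y}{\to}C\overset{\delta}{\dra}$ and to $0\to X\overset{1}{\to}X\overset{0}{\dra}$, with vertical maps $0\colon A\to 0$ and $f\colon B\to X$. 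The left square commutes precisely because $fx=0$. (ET3) then yields $c\colon C\to X$ with $cy=f$.

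\emph{Exactness at $\C(A,X)$.} We have $\delta^\sharp(g)=g_*\delta$. If $g=hx$, then $g_*\delta=h_*(x_*\delta)$, so it suffices to show $x_*\delta=0$. Conversely, suppose $g_*\delta=0$. Then $g_*\delta$ is realized by the split conflation $X\to X\oplus C\to C$. Since $(g,1_C)\colon\delta\to g_*\delta$ is a morphism of extensions, a realizing morphism of triangles $(g,b,1_C)$ exists. Composing $b$ with the projection $X\oplus C\to X$ gives $h$ with $hx=g$.

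\emph{Exactness at $\mathbb{E}(C,X)$.} We have $\mathbb{E}(y,X)(g_*\delta)=g_*y^*\delta$ by the Proposition proved just above, so the composite vanishes once $y^*\delta=0$. Conversely, let $\theta\in\mathbb{E}(C,X)$ with $y^*\theta=0$, and realize $\theta$ by $X\to E\overset{p}{\to}C$. The contravariant half of exactness at $\C(B,C)$ (proved dually to the first step) shows that $y=pb$ for some $b\colon B\to E$. Applying (ET3)$^{\rm op}$ to the right square $p b=1_C\, y$ produces $a\colon A\to X$ with $(a,1_C)\colon\delta\to\theta$ a morphism of extensions. This means $a_*\delta=1_C^*\theta=\theta$.

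The main obstacle is the pair of vanishing facts $x_*\delta=0$ and $y^*\delta=0$. These do not follow formally from the realization alone. For $y^*\delta=0$, I will realize $y^*\delta$ by $A\to E\overset{p}{\to}B$ together with a morphism $(1_A,e,y)$ to the $\delta$-triangle. I will then use (ET4)$^{\rm op}$, or the factorization in Remark \ref{factorization of E-extensions}, applied to the composite $y$, to produce a section of $p$. Splitness of the conflation then forces $y^*\delta=0$, since additivity of $\mathfrak{s}$ identifies split conflations with zero extensions. The case $x_*\delta=0$ is dual. Once these two facts are established, every remaining step is a direct application of (ET3) or (ET3)$^{\rm op}$.
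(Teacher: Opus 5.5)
\textbf{Gap: the vanishing facts $x_*\delta=0$ and $y^*\delta=0$ are not proved.} The paper does not prove this statement; it cites Nakaoka--Palu. Your exactness arguments are the standard ones from that source: the (ET3) lift when $fx=0$, the split realization of $g_*\delta=0$, and the (ET3)$^{\rm op}$ step giving $a_*\delta=\theta$. The problem is the step you yourself flag. These two facts are exactly what make the sequences complexes at $\mathcal{C}(A,-)$ and $\mathbb{E}(C,-)$, and dually at $\mathcal{C}(-,C)$ and $\mathbb{E}(-,A)$. Your sketch does not deliver them, for two reasons.

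First, you never say how a section of your chosen $p$ (in $A\to E\xrightarrow{p}B$ realizing $y^*\delta$) arises. The realization axiom would give a map $s$ with $ps=1_B$, as part of a morphism from $0\to B\xrightarrow{1_B}B$, only if $(0,1_B)\colon 0\to y^*\delta$ were already a morphism of extensions. That means $y^*\delta=0$, which is circular. Lifting $1_B$ along $p$ is likewise the exactness you are trying to prove. (ET4)$^{\rm op}$ produces a new object and a new triangle, not a section of a pre-chosen $p$.

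Second, additivity of $\mathfrak{s}$ only says that $\mathfrak{s}(0)$ is split. It does not say that an extension with a split realization is zero, because $\mathfrak{s}$ need not be injective: in a triangulated category, $A\to 0\to A[1]$ realizes both $1_{A[1]}$ and $-1_{A[1]}$. The implication ``split $\Rightarrow$ zero'' itself needs (ET3)$^{\rm op}$.

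\textbf{The fix.} Neither (ET4)$^{\rm op}$ nor sections are needed; use the same trick you already used for $yx=0$.
\begin{itemize}
\item For $x_*\delta=0$: apply (ET3) to $A\xrightarrow{x}B\xrightarrow{y}C\overset{\delta}{\dashrightarrow}$ and to $B\xrightarrow{1_B}B\to 0\overset{0}{\dashrightarrow}$, which realizes $0\in\mathbb{E}(0,B)$. Take vertical maps $x\colon A\to B$ and $1_B$; the left square $1_B\circ x=1_B\circ x$ commutes. (ET3) gives $c\colon C\to 0$ with $(x,c)$ a morphism of extensions, so $x_*\delta=c^*0=0$.
\item For $y^*\delta=0$: apply (ET3)$^{\rm op}$ to $0\to B\xrightarrow{1_B}B\overset{0}{\dashrightarrow}$ and the $\delta$-triangle, with $b=1_B$ and $c=y$. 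The right square $y\circ 1_B=y\circ 1_B$ commutes, so there is $a\colon 0\to A$ with $y^*\delta=a_*0=0$.
\end{itemize}
With these two facts in place, the rest of your proof goes through.

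One small slip: in your third step, the factorization $y=pb$ uses exactness at $\mathcal{C}(-,C)$. That is dual to your second step (exactness at $\mathcal{C}(A,-)$), not your first. That direction indeed needs no vanishing.
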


\begin{proposition} \label{Ext=0 and factorization}
Let $(\C,\mathbb{E},\mathfrak{s})$ be an extriangulated category, $\I, \J$ be ideals of $\C$ and $i\colon A \ra B,\ j\colon X \ra Y$ be morphisms in $\I$ and $\J$ respectively. Consider the following diagrams of $\E$-triangles
	\begin{equation}
		\begin{array}{l} \label{E-extension diagram 1}
			$$\xymatrix{X \ar[r]^{f} \ar@{=}[d] &E \ar[r]^{g} &B \ar@{-->}[r]^{\delta} &\\
				X \ar[r]^{l} \ar[d]^{j} &F \ar[r]^{m}  \ar[u] \ar[d] &A \ar@{-->}[r]^{i^{*}\delta} \ar@{=}[d] \ar[u]^{i}&\\
				Y \ar[r] &D \ar[r] &A \ar@{-->}[r]^{j_{*}i^{*}\delta} &}$$
			
		\end{array}
	\end{equation}
and
	\begin{equation}
		\begin{array}{l} \label{E-extension diagram 2}		
			$$\xymatrix{X \ar[r]^{f} \ar[d]^{j} &E \ar[r]^{g} \ar[d] &B \ar@{-->}[r]^{\delta} \ar@{=}[d] &\\
				Y \ar[r]^{n} \ar@{=}[d] &G \ar[r]^{a} &B \ar@{-->}[r]^{j_{*}\delta} &\\
				Y \ar[r] &D^{\prime} \ar[r]  \ar[u] &A \ar@{-->}[r]^{i^{*}j_{*}\delta}  \ar[u]^{i}&.}$$
			
		\end{array}	
	\end{equation}	
we have that
     $\E(i,j)=0$ if and only if $j$ factors through $l$ in diagram {\rm (\ref{E-extension diagram 1})} if and only if $i$ factors through $a$ in diagram {\rm (\ref{E-extension diagram 2})}.
\end{proposition}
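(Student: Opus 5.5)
Since $\E(i,j)=j_*i^*\delta$-type maps are computed on extensions, the statement should be read as: $\E(i,j)\colon\E(B,X)\to\E(A,Y)$ vanishes if and only if, for every $\delta\in\E(B,X)$ (with the diagrams built from that $\delta$), $j$ factors through $l$, if and only if $i$ factors through $a$. By the preceding proposition, $\E(i,j)(\delta)=j_*i^*\delta=i^*j_*\delta$. So the plan is to show, for a fixed $\delta$, that each factorization condition is equivalent to the vanishing of this single element. Both equivalences come from the long exact sequences of Proposition~\ref{FactExact}, applied to the middle rows of the two diagrams.

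For diagram (\ref{E-extension diagram 1}), I would apply the first exact sequence of Proposition~\ref{FactExact} to the $\E$-triangle $X\overset{l}{\lra}F\overset{m}{\lra}A\overset{i^*\delta}{\dra}$, evaluated at $Y$:
$$\C(F,Y)\xrightarrow{\C(l,Y)}\C(X,Y)\xrightarrow{(i^*\delta)^\sharp_Y}\E(A,Y).$$
Exactness gives that $j\in\C(X,Y)$ lies in the image of $\C(l,Y)$, i.e.\ $j=hl$ for some $h\colon F\to Y$, if and only if $(i^*\delta)^\sharp_Y(j)=j_*i^*\delta=0$. The third row of the diagram is simply the realization of $j_*i^*\delta$, so this matches the statement.

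For diagram (\ref{E-extension diagram 2}), I would apply the second exact sequence to $Y\overset{n}{\lra}G\overset{a}{\lra}B\overset{j_*\delta}{\dra}$, evaluated at $A$:
$$\C(A,G)\xrightarrow{\C(A,a)}\C(A,B)\xrightarrow{(j_*\delta)_\sharp^A}\E(A,Y).$$
Exactness gives that $i=ak$ for some $k\colon A\to G$ if and only if $i^*j_*\delta=0$. Since $i^*j_*\delta=j_*i^*\delta$, the two factorization conditions are equivalent to each other and to $\E(i,j)(\delta)=0$. Quantifying over all $\delta$ then gives $\E(i,j)=0$.

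There is no substantial obstacle. The only points needing care are, first, checking that the middle rows really realize $i^*\delta$ and $j_*\delta$ respectively; this holds by the construction in Remark~\ref{factorization of E-extensions}, with morphisms of $\E$-triangles $(1,-,i)$ and $(j,-,1)$. Second, one must identify $\delta^\sharp$ and $\delta_\sharp$ with pushforward and pullback as in Definition~\ref{DefYoneda}. The hypotheses that $i\in\I$ and $j\in\J$ are not used beyond $i$ and $j$ being morphisms.
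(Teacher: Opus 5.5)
Your proof is correct and is essentially the paper's argument. Both apply the exact sequences of Proposition~\ref{FactExact} to the middle rows $X\overset{l}{\lra}F\overset{m}{\lra}A\overset{i^{*}\delta}{\dra}$ and $Y\overset{n}{\lra}G\overset{a}{\lra}B\overset{j_{*}\delta}{\dra}$, and use $(i^{*}\delta)^{\sharp}_{Y}(j)=j_{*}i^{*}\delta$ and $(j_{*}\delta)_{\sharp}^{A}(i)=i^{*}j_{*}\delta$. Your explicit quantification over all $\delta\in\E(B,X)$ also makes precise the final step ``$j_{*}i^{*}\delta=0$ if and only if $\E(i,j)=0$'', which the paper states for a single fixed $\delta$.
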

	
\begin{proof}
	Applying $\Hom_{\C}(-,Y)$ to the second row in the diagram (\ref{E-extension diagram 1}), we obtain an exact sequence
	$$\C(F,Y) \xrightarrow{\C(l,Y)} \C(X,Y) \xrightarrow{(i^{*}\delta)^{\sharp}_{Y}} \E(A,Y).$$	
	Then $j$ factors through $l$ if and only if $j \in \im\C(l,Y)=\ker(i^{*}\delta)^{\sharp}_{Y}$ if and only if $j_{*}i^{*}\delta=(i^{*}\delta)^{\sharp}_{Y}(j)=0$ if and only if $\E(i,j)=0$.
	
	Applying $\Hom_{\C}(A,-)$ to the second row in the diagram (\ref{E-extension diagram 2}), we obtain an exact sequence
	$$\C(A,G) \xrightarrow{\C(A,a)} \C(A,B) \xrightarrow{(j_{*}\delta)_{\sharp}^{A}} \E(A,Y).$$
	
Then $i$ factors through $a$ if and only if $i \in \im\C(A,a)=\ker(j_{*}\delta)_{\sharp}^{A}$ if and only if $i^{*}j_{*}\delta=(j_{*}\delta)_{\sharp}^{A}(i)=0$ if and only if $\E(i,j)=0$. 	
\end{proof}

\begin{proposition} {\rm \cite[Corollary 3.16]{NP}} \label{cor3.16}
	Let $(\C,\mathbb{E},\mathfrak{s})$ be an extriangulated category.
	\begin{itemize}
		\item [\rm (1)] Let $x \in \C(A,B),\ f\in \C(A,D)$ be any pair of morphisms. If $x$ is an inflation, then so is $\mv{f}{x} \in \C(A,D\oplus B)$.
		\item [\rm (2)] Let $y \in \C(B,C),\ f\in \C(D,C)$ be any pair of morphisms. If $y$ is a deflation, then so is $(y\ f) \in \C(B\oplus D,C)$.
	\end{itemize}	
\end{proposition}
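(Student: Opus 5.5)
This is Corollary 3.16 of Nakaoka--Palu, so the plan follows the standard argument from the axioms; part (2) is dual to part (1), so I only treat (1).

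Let $x$ be an inflation with conflation $A\overset{x}{\lra}B\overset{y}{\lra}C\overset{\delta}{\dra}$. I will show that
$$A\xrightarrow{\mv{f}{x}}D\oplus B\xrightarrow{(0\ y)}C$$
realizes the same $\delta$.

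\textbf{Step 1.} Take the split $\E$-triangle $D\overset{1}{\lra}D\lra 0\overset{0}{\dra}$. By additivity of $\s$, the direct sum
$$A\oplus D\xrightarrow{\left(\begin{smallmatrix} x&0\\0&1\end{smallmatrix}\right)}B\oplus D\xrightarrow{(y\ 0)}C$$
realizes $\delta\oplus 0\in\E(C,A\oplus D)$.

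\textbf{Step 2.} Consider the isomorphism $\theta=\left(\begin{smallmatrix}1&0\\-f&1\end{smallmatrix}\right)\colon A\oplus D\to A\oplus D$, or a similar shear. Transporting the conflation along isomorphisms of the end terms gives another $\E$-triangle; this uses the fact that $\s$ is compatible with isomorphisms of $\E$-extensions, which follows from the realization axioms. The resulting conflation has inflation $\left(\begin{smallmatrix} x&0\\ f&1\end{smallmatrix}\right)\colon A\oplus D\to B\oplus D$ with the same cone $C$.

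\textbf{Step 3.} Apply (ET4)$^{\rm op}$, or more simply the composition axiom (ET4), to the composite
$$A\xrightarrow{\mv{1}{0}}A\oplus D\to B\oplus D.$$
The first map is a split inflation with cone $D$. The composite is $\mv{x}{f}$, which up to reordering the summands is $\mv{f}{x}$. (ET4) says that a composite of inflations is an inflation. Hence $\mv{f}{x}$ is an inflation, sitting in a conflation $A\to D\oplus B\to E$ for some $E$.

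\textbf{Main obstacle.} The delicate points are Step 2, namely justifying that conjugating an $\E$-triangle by isomorphisms again yields an $\E$-triangle, and the bookkeeping of (ET4). I expect the cleanest route is to cite NP's Proposition 3.7 (isomorphism invariance) together with the fact that split inflations are inflations. I expect the third term $E$ to turn out to be $C\oplus D$ modulo identification, but proving that is unnecessary for the statement.
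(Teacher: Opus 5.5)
Your Steps 1--3 are correct and give a complete proof. The paper itself gives no proof of this statement; it only cites Nakaoka--Palu. In effect you factor
\[
\mv{x}{f}=(x\oplus 1_D)\circ\mv{1_A}{f}\colon A\lra A\oplus D\lra B\oplus D .
\]
Here $x\oplus 1_D$ is an inflation by additivity of $\s$, and $\mv{1_A}{f}=\theta^{-1}\mv{1}{0}$ is a split inflation. The composition part of (ET4) then makes the composite an inflation; (ET4)$^{\rm op}$ is the wrong axiom to cite for part (1). Reordering to $\mv{f}{x}$ only replaces the middle term by an isomorphic one, so the realizing sequence stays in the same equivalence class. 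Part (2) is dual, via $(y\ f)=(1_C\ f)\circ(y\oplus 1_D)$ and (ET4)$^{\rm op}$.

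Two statements in your write-up are false and should be removed, even though the argument never uses them.

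\textbf{The announced goal.} The claim that $A\xrightarrow{\mv{f}{x}}D\oplus B\xrightarrow{(0\ y)}C$ realizes $\delta$ fails whenever $D\neq 0$. Already in an exact category, the kernel of $(0\ y)$ is $1_D\oplus x\colon D\oplus A\to D\oplus B$, not $\mv{f}{x}$.

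\textbf{The third term.} $E$ is not $C\oplus D$ in general. Applying (ET4)(i) to your factorization gives an $\E$-triangle $D\to E\to C$ realizing $-f_*\delta$. So $E$ is a pushout of $x$ along $f$, and this triangle splits only when $f_*\delta=0$.

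A single example refutes both claims. Take abelian groups, $x=2\colon\mathbb{Z}\to\mathbb{Z}$, and $f\colon\mathbb{Z}\to\mathbb{Z}/2$ the projection. The cokernel of $\mv{f}{x}$ is $\mathbb{Z}/4$, which is neither $C=\mathbb{Z}/2$ nor $C\oplus D=\mathbb{Z}/2\oplus\mathbb{Z}/2$.
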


\subsection{Higher extensions}

\begin{definition}{\rm \cite[Definition 3.23]{NP}}\label{projective-injective}
	Let $(\C,\mathbb{E},\mathfrak{s})$ be an extriangulated category.
	\begin{enumerate}
		\item [(1)] An object $P\in\C$ is called {\it projective} if it satisfies $\E(P,\C)=0$. We denote the subcategory of projective objects by $\mathcal P\subseteq\C$. We say that $\C$ {\it has enough projectives} if any object $C\in\C$ admits a deflation $P\to C$ from some $P\in\mathcal P$.
		\item [(2)] Dually, an object $I\in\C$ is called {\it injective} if it satisfies $\E(\C,I)=0$. We denote the subcategory of injective objects by $\mathcal I\subseteq\C$. We say that $\C$ {\it has enough injectives} if any object $C\in\C$ admits a inflation $C\to I$ to some $I\in\mathcal I$.
	\end{enumerate}	
\end{definition}

\begin{definition} \cite[Definiton 4.2]{NP}
	Let $(\mathcal{C},\mathbb{E},\mathfrak{s})$ be an extriangulated category, and let $\mathcal{X}$ and $\mathcal{Y}$ be subcategories of $\mathcal{C}$.  Define $\Cone(\X,\Y)$ and $\CoCone(\X,\Y)$ of $\C$ as follows.
	\begin{itemize}
		\item[(1)] An object $C\in\mathcal{C}$ belongs to $\Cone(\X,\Y)$ if and only if there exists an $\mathbb{E}$-triangle
		$$X \longrightarrow Y \longrightarrow C \dashrightarrow$$
		with $X\in\X$ and $Y\in\Y$.
		\item[(2)] An object $C\in\mathcal{C}$ belongs to $\CoCone(\X,\Y)$ if and only if there exists an $\mathbb{E}$-triangle
		$$C \longrightarrow X \longrightarrow Y \dashrightarrow$$
		with $X\in\X$ and $Y\in\Y$.
	\end{itemize}
\end{definition}

Liu and Nakaoka \cite{LN} define higher extensions as follows. $$\Omega\X=\CoCone(\P,\X),\ \Sigma\X=\Cone(\X,\I),$$
are called the {\it syzygy} and {\it cosyzygy} of $\X$ respectively. Put $\Omega^{0}\X=\Sigma^{0}\X=\X$. For any integer $k> 0$,  define the {\it k-th syzygy} and {\it k-th cosyzygy} of $\X$ by
$$\Omega^{k}\X=\Omega(\Omega^{k-1}\X)=\CoCone(\P,\Omega^{k-1}\X),$$
$$\Sigma^{k}\X=\Sigma(\Sigma^{k-1}\X)=\Cone(\Sigma^{k-1}\X,\I).$$

For objects $A,B \in \mathcal{C}$, \cite[Lemma 5.1]{LN} shows that $\E(A,\Sigma^{k}B)\cong \E(\Omega^{k}A,B)$. For convenience, we denote $\E(A,\Sigma^{k}B)\cong \E(\Omega^{k}A,B)$ by $\E^{k+1}(A,B)$ for any integer $k \ge 0$.

Similar to the higher extensions of objects, we define the higher extensions of morphisms.	
For any morphism $j\colon X\ra Y$, define $\Sigma j\colon \Sigma X\ra \Sigma Y$ to be any morphism such that the following diagram is commutative
$$\xymatrix{X
	\ar[r] \ar[d]^{j} &I(X) \ar[r] \ar[d] &\Sigma X \ar@{-->}[r] \ar[d]^{\Sigma j} &\\
	Y \ar[r] &I(Y) \ar[r] &\Sigma Y \ar@{-->}[r] &.}$$

Note that
$\Sigma X,\ \Sigma j$ are not unique. However, $\E(A,\Sigma X),\ \E(A,\Sigma j)$
are uniquely determined up to isomorphism. On the one hand, take two 1-th cosygyzys of $X$, denoted by $\Sigma_{1} X$ and $\Sigma_{2} X$ respectively. Then there exists a commutative diagram
$$\xymatrix{X \ar[r] \ar@{=}[d]^{1_{X}} &I_{1}(X) \ar[r] \ar@{-->}[d] &\Sigma_{1} X \ar@{-->}[d]^{f} \ar@{-->}[r] &\\
	X \ar[r]  & I_{2}(X) \ar[r] &\Sigma_{2} X \ar@{-->}[r] &.}$$
Take a 1-th sygyzy of $A$, thus we have the following commutative diagram by \cite[Lemma 5.1]{LN}
$$\xymatrix{\E(\Omega A,X) \ar[r]^{\varphi_{A,X_{1}}} \ar@{=}[d] &\E(A,\Sigma_{1}X) \ar[d]^{\E(A,f)}_{\cong}\\
	\E(\Omega A,X) \ar[r]^{\varphi_{A,X_{2}}}  &\E(A,\Sigma_{2}X),} $$
where $\varphi$ is isomorphic to its inverse, denoted by $\psi$.
Therefore, $\E(A,\Sigma_{1} X)\cong\E(A,\Sigma_{2} X)$.

On the other hand, assume that both $g^{\prime}$ and $g^{\prime\prime}$ make the following diagram commutative
$$\xymatrix{X
	\ar[r] \ar[d]^{j} &I(X) \ar[r] \ar@{-->}[d] &\Sigma X \ar@{-->}[r] \ar@{-->}[d]^{g^{\prime\prime}} _{g^{\prime}}&\\
	Y \ar[r] &I(Y) \ar[r] &\Sigma Y \ar@{-->}[r] &.}$$
Then both $\E(A,g^{\prime})$ and $\E(A,g^{\prime\prime})$ make the following diagram commutative
$$\xymatrix{\E(\Omega A,X) \ar[r]^{\varphi_{A,X}} \ar[d]_{\E(\Omega A,j)} &\E(A,\Sigma X) \ar[d]^{\E(A,g^{\prime\prime})}_{\E(A,g^{\prime})}\\
	\E(\Omega A,Y) \ar[r]^{\varphi_{A,Y}}  &\E(A,\Sigma Y).} $$
Therefore, $\E(A,g^{\prime})\cong\E(A,g^{\prime\prime})$. In this case, we denote both
$g^{\prime}$ and $g^{\prime\prime}$
uniformly by $\Sigma j$.

Furthermore, for any morphisms $i\colon A\ra B, j\colon X \ra Y$, we obtain four $\E$-triangles by taking 1-th cosygyzies of $X$ and $Y$, 1-th sygyzies of $A$ and $B$:
\begin{equation} \label{E-triangle(1)}
	X \lra I(X) \lra \Sigma X \stackrel{\delta_{X}}{\dra},
\end{equation}
\begin{equation} \label{E-triangle(2)}
	Y \lra I(Y) \lra \Sigma Y \stackrel{\delta_{Y}}{\dra},
\end{equation}
\begin{equation} \label{E-triangle(3)}
	\Omega A \lra P(A) \lra A \stackrel{\lambda_{A}}{\dra},
\end{equation}
\begin{equation} \label{E-triangle(4)}
	\Omega B \lra P(B) \lra B \stackrel{\lambda_{B}}{\dra}.
\end{equation}	
Then there exists a commutative diagram
$$\xymatrix{\E(B,\Sigma X) \ar @{} [dr] |{(\rm\uppercase\expandafter{\romannumeral 1})} \ar[r]^{\E(i,\Sigma X)} \ar[d]_{\cong}^{\psi_{B,X}} &\E(A,\Sigma X) \ar @{} [dr] |{(\rm\uppercase\expandafter{\romannumeral 2})} \ar[r]^{\E(A,\Sigma j)} \ar[d]_{\cong}^{\psi_{A,X}} &\E(A,\Sigma Y) \ar[d]_{\cong}^{\psi_{A,Y}}\\
	\E(\Omega B,X) \ar[r]_{\E(\Omega i,X)} &\E(\Omega A,X) \ar[r]_{\E(\Omega A,j)}  &\E(\Omega A,Y)}$$
which shows that $\E(i,\Sigma X)\cong \E(\Omega i,X)$ and $\E(A,\Sigma j)\cong \E(\Omega A,j)$.	

Thus, we can define a covariant functor
$$\begin{aligned}
	\E^{2}(A,-)\colon \C &\rightarrow \Ab\\
	X &\mapsto \E^{2}(A,X)\colon=\E(A,\Sigma X)\cong\E(\Omega A,X)\\
	j &\mapsto \E^{2}(A,j)\colon=\E(A,\Sigma j)\cong\E(\Omega A,j)
\end{aligned}$$
and a contravariant functor
$$\begin{aligned}
	\E^{2}(-,X)\colon \C &\rightarrow \Ab\\
	A &\mapsto \E^{2}(A,X)\colon=\E(A,\Sigma X)\cong\E(\Omega A,X)\\
	i &\mapsto \E^{2}(i,X)\colon=\E(i,\Sigma X)\cong\E(\Omega i,X).
\end{aligned}$$

Similarly, we define higher functors. For any positive integer $k$,
$$\begin{aligned}
	\E^{k}(A,-)\colon \C &\rightarrow \Ab\\
	X &\mapsto \E^{k}(A,X)\colon=\E(A,\Sigma^{k-1} X)\cong\E(\Omega^{k-1} A,X)\\
	j &\mapsto \E^{k}(A,j)\colon=\E(A,\Sigma^{k-1} j)\cong\E(\Omega^{k-1} A,j)
\end{aligned}$$
and
$$\begin{aligned}
	\E^{k}(-,X)\colon \C &\rightarrow \Ab\\
	A &\mapsto \E^{k}(A,X)\colon=\E(A,\Sigma^{k-1} X)\cong\E(\Omega^{k-1} A,X)\\
	i &\mapsto \E^{k}(i,X)\colon=\E(i,\Sigma^{k-1} X)\cong\E(\Omega^{k-1} i,X).
\end{aligned}$$	
For convenience, we denote $\E(i,\Sigma^{k-1} j)\cong \E(\Omega^{k-1} i,j)$ by $\E^{k}(i,j)$ for any positive integer $k$.

\subsection{Orthogonality of ideals and ideal approximations}
Let $(\C,\E,\s)$ be an extriangulated category, $\I, \J$ be ideals of $\C$, $k$ any positive integer. An ideal $\I$ in $\C$ is an additive two-sided ideal of its morphism category, i.e., a collection of subgroups $\I(X,Y)\subseteq\Hom_{\C}(X,Y)$ closed under composition with arbitrary morphisms on both sides for any objects $X,\ Y$ in $\C$.

Denote ${\rm Ob}(\I)=\{X \in \C \ |\ 1_{X} \in \I \}$. It is easy to see that ${\rm Ob}(\I)$ is closed under direct sums and direct summands.

Two morphisms $f\colon X \ra Y,\ g\colon A \ra B$ are called $\Hom$-orthogonal if $\Hom_{\C}(f,g)=0$.
Let $\S$ be a collection of morphisms of $\C$, define left $\Hom$-orthogonal ideal and right $\Hom$-orthogonal ideal as follows:
$$^{\bot} \S=\{f \ | \ \Hom_{\C}(f,s)=0 \ \text{for any morphism}\ s \in \S \},$$
$$\S^{\bot}=\{g \ | \ \Hom_{\C}(s,g)=0 \ \text{for any morphism}\ s \in \S \}.$$

Two morphisms $f\colon X \ra Y,\ g\colon A \ra B$ are called Ext-orthogonal if $\E(f,g)=0$.
Let $\S$ be a collection of morphisms of $\C$, define left $k$-th Ext-orthogonal ideal and right $k$-th Ext-orthogonal ideal as follows:
$$^{\bot_{k}} \S=\{f \ | \ \E^{k}(f,s)=0 \ \text{for any morphism}\ s \in \S \},$$
$$\S^{\bot_{k}}=\{g \ | \ \E^{k}(s,g)=0 \ \text{for any morphism}\ s \in \S \}.$$

%\begin{lemma}
%	For any objects $A,\ B\in \C$, $\E(1_{A},1_{B})=0$ if and only if $\E(A,B)=0$.
%\end{lemma}

%\begin{lemma} \label{Ext,Hom,composition}
%	${\rm Hom}_{\C}(\I,\J)=0$ if and only if $\mathcal{J}\mathcal{I}=0$. 	
%\end{lemma}

%\begin{proof}
%	Assume that ${\rm Hom}_{\C}(\I,\J)=0$ and let $i:X \rightarrow Y \in \mathcal{I},\ j:Y \rightarrow Z \in \mathcal{J}$ be morphisms. Consider the following composition
%	$$\C(Y,Y) \xrightarrow{\C(i,Y)} \C(X,Y) \xrightarrow{\C(X,j)}  \C(X,Z),$$
%	$ji=\C(X,j)\circ\C(i,Y)(1_{Y})=\C(i,j)(1_{Y})=0$.
%	Conversely, assume that $\mathcal{J} \mathcal{I}$=0 and let $i:X \rightarrow Y \in \mathcal{I},\ j:A \rightarrow B \in \mathcal{J}$ be morphisms.
%	Consider the following composition
%	$$\C(Y,A) \xrightarrow{\C(i,A)} \C(X,A) \xrightarrow{\C(X,j)}  \C(X,B),$$
%	for any morphism $f:Y \rightarrow A$, $\C(i,j)(f)=jfi=0$ since $fi \in \mathcal{I}$.	
%\end{proof}

A morphism $f\colon X \ra M$ in $\I$ is called an $\I$-precover, if for any morphism $g\colon X^{\prime} \ra M$ in $\I$, there exists a morphism $h\colon X^{\prime} \ra X$ such that $g=fh$. A morphism $f\colon M \ra Y$ in $\J$ is called a $\J$-preenvelope, if for any morphism $g\colon M \ra Y^{\prime}$ in $\J$, there exists a morphism $h\colon Y \ra Y^{\prime}$ such that $g=hf$.

The ideal $\I$ is precovering, if for any object $M \in \C$, there exists an $\I$-precover of $M$. The ideal $\J$ is preenveloping, if for any object $M \in \C$, there exists a $\J$-preenvelope of $M$.

A morphism $f\colon X \ra M$ in $\I$ is called a special $\I$-precover, if $f$ is a deflation and $f$ can be embedded the diagram between $\E$-triangles
$$\xymatrix{A \ar[r] \ar[d]_{h} &B \ar[r] \ar[d] &M \ar@{=}[d] \ar@{-->}[r]^{\lambda} &\\
	K \ar[r]  &X \ar[r]^{f} &M \ar@{-->}[r]^{h_{*}\lambda} &,}$$
where $h \in \I^{\bot_{1}}$.

A morphism $g\colon M \ra Y$ in $\J$ is called a special $\J$-preenvelope, if $g$ is an inflation and $g$ can be embedded the diagram between $\E$-triangles
$$\xymatrix{M \ar[r]^{g} \ar@{=}[d]  &Y \ar[r] \ar[d] &C \ar[d]^{h} \ar@{-->}[r]^{h^{*}\delta} &\\	M \ar[r]  &A \ar[r] &B \ar@{-->}[r]^{\delta} &,}$$
where $h \in {^{\bot_{1}}\J}$.

An ideal $\mathcal{I}$ of $\mathcal{C}$ is called special precovering if every object $M \in \mathcal{C}$ admits a special $\mathcal{I}$-precover;
dually, an ideal $\mathcal{J}$ is called special preenveloping if every object $M \in \mathcal{C}$ admits a special $\mathcal{J}$-preenvelope.

\subsection{Triangulated structures of stable categories}

Let $\C$ be a Frobenius extriangulated category, $\omega$ the subcategory of projective-injective objects. The objects of quotient category $\C/\omega$ are the same as those of $\C$ and the morphisms are equivalence classes of morphisms factoring through objects in $\omega$, i.e.,
$$\Hom_{\C/\omega}(X,Y)=\Hom_{\C}(X,Y)/\omega(X,Y)$$
where $\omega(X,Y)=\{f \in \Hom_{\C}(X,Y)\ | \ f \ \text{factors through some object in}\ \omega\}$. Denote the quotient category $\C/\omega$ by $\underline{\C}$.

For any $\E$-triangle
$$X \lra Y \lra Z \dra,$$ consider the following commutative diagram
$$\xymatrix{X \ar[r]^{f} \ar@{=}[d]& Y \ar[r]^{g} \ar[d]& Z \ar@{-->}[r] \ar[d]^{h}& \\
	X \ar[r] & I(X) \ar[r] & \Sigma X \ar@{-->}[r] &,}$$
where $I(X) \in \omega$ and denote the shift of $X$ in $\underline{\C}$ by $\Sigma X$. The sequence
$$X \stackrel{\underline{f}}{\lra}Y \stackrel{\underline{g}}{\lra} Z \stackrel{\underline{h}}{\lra} \Sigma X$$
is the standard triangle in $\underline{\C}$.	
Then $\underline{\mathcal{C}}:=\mathcal{C}/\omega$ is a triangulated category with the standard triangles defined above; see \cite[Corollary 7.4] {NP} and \cite[Theorem 3.13]{ZZ}.
In particular,
we have that $\Sigma\colon\underline{\C}	\ra  \underline{\C}$ is an automorphism.
For any object $X,\ \Sigma X$ is unique in $\underline{\C}$. For any morphism $\underline{x}$, $\Sigma\underline{x}\triangleq\underline{\Sigma x}$ is unique in $\underline{\C}$.

For any ideal $\J$ in $\C$, we have
$$\Sigma (\J/\omega)= (\Sigma \J)/\omega,~\Sigma^{2} (\J/\omega)=\Sigma(\Sigma (\J/\omega))=\Sigma((\Sigma \J)/\omega)=(\Sigma^{2} \J)/\omega,~\Sigma^{k} (\J/\omega)=(\Sigma^{k} \J)/\omega$$ for any positive integer $k$.
In what follows, we abbreviate $(\Sigma ^{k}\J)/\omega$ as $\Sigma^{k} \J/\omega$.

%$$ \Sigma:\underline{\C} \xrightarrow{\ \ \ \ \ } \underline{\C}\ \ \ \ \ \ \ \ \ \ \ \ \ $$
%$$\xymatrix{
%X \ar[r] \ar[d]^{\underline{x}} &\Sigma X \ar[d]^{\Sigma\underline{x}\triangleq\underline{\Sigma x}}\\
%A \ar[r] &\Sigma A}$$

For any morphism between $\E$-triangles
$$\xymatrix{X \ar[r]^{f} \ar[d]^{x}& Y \ar[r]^{g} \ar[d]^{y}& Z \ar@{-->}[r] \ar[d]^{z}& \\
	A \ar[r]^{a} & B \ar[r]^{b} & C \ar@{-->}[r] &,}$$	
we obtain a commutative diagram of triangles in $\underline{\C}$
$$\xymatrix{X \ar[r]^{\underline{f}} \ar[d]^{\underline{x}}& Y \ar[r]^{\underline{g}} \ar[d]^{\underline{y}}& Z \ar[r]^{\underline{h}} \ar[d]^{\underline{z}}&\Sigma X \ar[d]^{\Sigma\underline{x}\triangleq\underline{\Sigma x}}\\
	A \ar[r]^{\underline{a}} & B \ar[r]^{\underline{b}} & C \ar[r]^{\underline{c}} &\Sigma A.}$$

Note that in the stable triangulated category $\underline{\C}$, the additive bifunctor $\E(-,-)=\underline{\C}(-,\Sigma -)$. For any ideal $\I/\omega,\J/\omega$ of $\underline{\C}$, $\E^{k}(\I/\omega,\J/\omega)=\underline{\C}(\I/\omega,\Sigma^{k} \J/\omega)$ for any positive integer $k$.

\section{Comparison between complete cotorsion pairs and complete ideal cotorsion pairs}

\setcounter{equation}{0}
	
We begin by recalling the definitions of complete cotorsion pairs and complete ideal cotorsion pairs.

\begin{definition}\cite[Definition 4.1]{NP}
Let $(\C,\E,\s)$ be an extriangulated category, $\X, \Y$ subcategories closed under isomorphisms and direct summands. The pair $(\X,\Y)$ is called a complete cotorsion pair, if it satisfies the following conditions.
\begin{itemize}
	\item [(1)] $\E(\X,\Y)=0$.
	\item [(2)] For any $C\in \C$, there exists an $\E$-triangle
    $$Y^{C} \lra X^{C} \lra C \dra,$$
    where $X^{C}\in \X, Y^{C} \in \Y$.
	\item [(3)] For any $C\in \C$, there exists an $\E$-triangle
	$$C \lra Y_{C} \lra X_{C} \dra,$$
	where $X_{C}\in \X, Y_{C} \in \Y$.
\end{itemize}
\end{definition}
%
%Let $(\C,\E,\s)$ be an extriangulated category and $\I$ be an ideal of $\C$.
%A morphism $f\colon X \ra M$ in $\I$ is called a special $\I$-precover, if $f$ is a deflation and $f$ can be embedded the diagram between $\E$-triangles
%$$\xymatrix{A \ar[r] \ar[d]_{h} &B \ar[r] \ar[d] &M \ar@{=}[d] \ar@{-->}[r]^{\lambda} &\\
%	K \ar[r]  &X \ar[r]^{f} &M \ar@{-->}[r]^{h_{*}\lambda} &,}$$
%where $h \in \I^{\bot}$.
%
%A morphism $g\colon M \ra Y$ in $\J$ is called a special $\J$-preenvelope, if $g$ is an inflation and $g$ can be embedded the diagram between $\E$-triangles
%$$\xymatrix{M \ar[r]^{g} \ar@{=}[d]  &Y \ar[r] \ar[d] &C \ar[d]^{h} \ar@{-->}[r]^{h^{*}\delta} &\\	M \ar[r]  &A \ar[r] &B \ar@{-->}[r]^{\delta} &,}$$
%where $h \in {^{\bot}\J}$.
%
%An ideal $\mathcal{I}$ of $\mathcal{C}$ is called \textbf{special precovering} if every object $M \in \mathcal{C}$ admits a special $\mathcal{I}$-precover;
%dually, an ideal $\mathcal{J}$ is called \textbf{special preenveloping} if every object $M \in \mathcal{C}$ admits a special $\mathcal{J}$-preenvelope.

\begin{definition} \cite[Definition 3.8]{ZH}
	Let $(\C,\E,\s)$ be an extriangulated category. An ideal pair $(\mathcal{I},\mathcal{J})$ is called an ideal cotorsion pair, if $\mathcal{I}^{\bot_{1}}=\mathcal{J},\ \mathcal{I}={^{\bot_{1}}\mathcal{J}}$. Moreover, an ideal cotorsion pair $(\I,\J)$ is complete, if $\I$ is a special precovering ideal and $\J$ is a special preenveloping ideal.	
\end{definition}

\begin{remark}
		(1) \cite[Theorem 5.3.4]{BM}~In a triangulated category, $\I$ is special precovering if and only if $\I$ is precovering. When $(\I,\J)$ is an ideal cotorsion pair, $\I$ is special precovering if and only if $\J$ is special preenveloping.
	
	(2) \cite[Lemma 4.5]{ZH}~In an extriangulated category with enough projective and enough injective objects, when $(\I,\J)$ is an ideal cotorsion pair, $\I$ is special precovering if and only if $\J$ is special preenveloping.
\end{remark}

We collect some examples of complete ideal cotorsion pairs.

\begin{example}
	(1) \cite[Corollary 4.19]{H}
Let $R$ be a Gorenstein ring and let $\mathcal{P}^{<\infty}$ denote the class of right $R$-modules of finite projective dimension.
Recall that a morphism $f\colon A \to X$ is $\mathcal{G}$-phantom if the pullback of every short exact sequence with end term $X$ along $f$ is ${\rm Hom}_{R}(-,\mathcal{P}^{<\infty})$-exact.
A morphism $g\colon M \to N$ is $\mathcal{G}$-injective if the pushout of every ${\rm Hom}_{R}(-,\mathcal{P}^{<\infty})$-exact sequence with initial term $M$ along $g$ is split.
Let $\varphi(\mathcal{G}\text{-xt})$ denote the class of $\mathcal{G}$-phantom morphisms, and let $\mathcal{G}\text{-inj}$ denote the class of $\mathcal{G}$-injective morphisms.
Then $(\varphi(\mathcal{G}\text{-xt}),\mathcal{G}\text{-inj})$ is a complete ideal cotorsion pair in ${\rm Mod}R$.

	(2) \cite[Proposition 6.1.1]{BM}
	Let $\T$ be a triangulated category with the shift functor $\Sigma$. Recall a projective class in $\T$ is a pair $(\mathcal P,\J)$ where $\mathcal P$ is a class of objects and $\J$ is an ideal satisfying the following conditions:
	\begin{itemize}
		\item [(a)] $\mathcal P=\{P\in \T\ |\ \Hom_{\T}(P,j)=0\ \text{for all $j \in \J$} \}.$
		\item [(b)] $\J=\{j\in {\rm Mor}(\T)\ |\ \Hom_{\T}(P,j)=0\ \text{for all $P \in \mathcal P$} \}.$
		\item [(c)] For any object $T \in \T$, there exists a triangle $$P\lra T \stackrel{j}{\lra} Y \lra \Sigma P,$$ where $P\in \mathcal P, j \in \J$.
	\end{itemize}
If $(\mathcal P,\mathcal J)$ is a projective class such that both $\mathcal P$ and $\mathcal J$ are closed under the shift functor, then $([\mathcal P],\mathcal J)$ is a complete ideal cotorsion pair.
	
	(3) \cite[Corollary 3.15]{ZH} Let $\C$ be an extriangulated category, $\mathbb{F}$ an additive subfunctor of $\E$ with enough injective morphisms.
	Recall a morphism $\varphi\colon X \ra C$ is $\mathbb{F}$-phantom, if $\varphi^{*}\delta\in\mathbb{F}(X,A)$ for any $\delta \in \E(C,A)$.
	A morphism $f\colon A \ra B$ is $\mathbb{F}$-injective, if $f_{*}\delta=0$ for any $\delta \in \mathbb{F}(C,A)$.
	Denote $\mathbf{Ph}(\mathbb{F})$ the class of $\mathbb{F}$-phantom morphisms,  $\mathbb{F}\text{-}\mathbf{inj}$ the class of $\mathbb{F}$-injective morphisms. Then
	$(\mathbf{Ph}(\mathbb{F}),\mathbb{F}\text{-}\mathbf{inj})$ is a complete ideal cotorsion pair.
\end{example}

Let $(\mathcal{C},\mathbb{E},\mathfrak{s})$ be an extriangulated category, and denote by ${\rm Mor}(\mathcal{C})$ the class of morphisms in $\mathcal{C}$. Let $\mathcal{D}$ be a subcategory of $\mathcal{C}$. Define
$$
[\mathcal{D}] := \{ f \in {\rm Mor}(\mathcal{C}) \mid f \text{ factors through some object in } \mathcal{D} \}.
$$
It is straightforward to verify that $[\mathcal{D}]$ is an ideal of $\mathcal{C}$, called an \textbf{object ideal}. Moreover, an ideal $\mathcal{I}$ is an object ideal if and only if $\mathcal{I} = [{\rm Ob}(\mathcal{I})]$.

\begin{theorem}\label{main11}
Let $(\mathcal{C},\mathbb{E},\mathfrak{s})$ be an extriangulated category, and let $\mathcal{X}, \mathcal{Y}$ be subcategories of $\mathcal{C}$. If $(\mathcal{X},\mathcal{Y})$ is a complete cotorsion pair in $\mathcal{C}$, then $([\mathcal{X}],[\mathcal{Y}])$ is a complete ideal cotorsion pair in $\mathcal{C}$.
\end{theorem}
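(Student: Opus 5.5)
Throughout, write $\I=[\X]$ and $\J=[\Y]$. I will prove three things in turn: that $\J=\I^{\bot_{1}}$ and $\I={^{\bot_{1}}\J}$, and then that both ideals are special (pre)covering/(pre)enveloping, using the two approximation $\E$-triangles from the definition of a complete cotorsion pair.

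\emph{Orthogonality.} If $i$ factors as $A\to X\to B$ with $X\in\X$ and $j$ factors as $C\to Y\to D$ with $Y\in\Y$, then $\E(i,j)$ factors through $\E(X,Y)=0$ by bifunctoriality, so $\E(i,j)=0$. This gives $\I\subseteq{^{\bot_{1}}\J}$ and $\J\subseteq\I^{\bot_{1}}$.

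Conversely, let $j\colon M\to N$ lie in $\I^{\bot_{1}}$. Take the $\E$-triangle $M\stackrel{g}{\lra}Y_M\lra X_M\stackrel{\delta}{\dra}$ from condition (3). Since $1_{X_M}\in\I$, we get $j_*\delta=\E(1_{X_M},j)(\delta)=0$. Applying $\C(-,N)$ to this triangle and using the exactness of Proposition \ref{FactExact}, $j$ lies in the image of $\C(g,N)$; that is, $j$ factors through $g$, hence through $Y_M\in\Y$, so $j\in\J$. Dually, for $i\colon M\to N$ in ${^{\bot_{1}}\J}$, use the triangle $Y^N\lra X^N\stackrel{f}{\lra}N\stackrel{\lambda}{\dra}$ from condition (2). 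Then $i^*\lambda=0$ because $1_{Y^N}\in\J$, so $i$ factors through $f$ and therefore through $X^N\in\X$. Hence $(\I,\J)$ is an ideal cotorsion pair.

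\emph{Completeness.} The candidate special $\I$-precover of $M$ is the deflation $f\colon X^M\to M$ in $Y^M\lra X^M\stackrel{f}{\lra}M\stackrel{\lambda}{\dra}$. It lies in $\I$ since it factors through $X^M$. For the required morphism of $\E$-triangles, take $h=1_{Y^M}$ and the top row equal to the triangle itself, so that $h_*\lambda=\lambda$. It remains to check $1_{Y^M}\in\I^{\bot_{1}}$, which holds because $1_{Y^M}\in\J=\I^{\bot_{1}}$. Dually, the inflation $g\colon M\to Y_M$ from condition (3), with $h=1_{X_M}\in{^{\bot_{1}}\J}$ and the identity diagram, is a special $\J$-preenvelope.

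The only step needing care is the converse inclusions above. There the point is to observe that $1_{X_M}\in\I$, so $\E(X_M,j)=0$ is available, and then to read off the factorization from the long exact sequence of Proposition \ref{FactExact}, which is essentially the argument of Proposition \ref{Ext=0 and factorization}. The remaining steps are formal.
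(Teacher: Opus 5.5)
Your proposal is correct and follows essentially the same route as the paper. The converse inclusions come from the approximation $\E$-triangles of conditions (2) and (3), using the exact sequences of Proposition \ref{FactExact} (the factorization criterion of Proposition \ref{Ext=0 and factorization}); the special precover and preenvelope are those same triangles with the identity morphism of $\E$-triangles. You write out the dual halves that the paper leaves implicit, but the method is the same.
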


\begin{proof}
The inclusion $[\mathcal{X}] \subseteq {}^{\perp_1}[\mathcal{Y}]$ is immediate. For the reverse inclusion, let $f\colon A \to B$ be a morphism in ${}^{\perp_1}[\mathcal{Y}]$. Since $(\mathcal{X},\mathcal{Y})$ is a cotorsion pair, there exists an $\mathbb{E}$-triangle
$$Y^{B} \lra X^{B} \stackrel{g}{\lra} B\stackrel{\delta}{\dra},$$
with $X^B \in \mathcal{X}$ and $Y^B \in \mathcal{Y}$. Consider the following morphism of $\mathbb{E}$-triangles:
$$
\xymatrix{
Y^B \ar[r] \ar@{=}[d] & X' \ar[r] \ar[d] & A \ar[d]^{f} \ar@{-->}[r]^{f^*\delta} & \\
Y^B \ar[r] & X^B \ar[r]^{g} & B \ar@{-->}[r]^{\delta}&,
}
$$
since $f \in {}^{\perp_1}[\mathcal{Y}]$, we have $\mathbb{E}(f,[\mathcal{Y}])=0$, which implies that $f$ factors through $X^B$. Hence $f \in [\mathcal{X}]$, and therefore ${}^{\perp_1}[\mathcal{Y}] \subseteq [\mathcal{X}]$. Thus $[\mathcal{X}] = {}^{\perp_1}[\mathcal{Y}]$.

Moreover, the morphism $g\colon X^B \to B$ is a special $[\mathcal{X}]$-precover, as it fits into the commutative diagram of $\mathbb{E}$-triangles
$$
\xymatrix{
Y^B \ar[r] \ar@{=}[d] & X^B \ar[r] \ar@{=}[d] & B \ar@{=}[d] \ar@{-->}[r] & \\
Y^B \ar[r] & X^B \ar[r]^{g} & B \ar@{-->}[r]^{\delta} &,
}
$$
where $1_{Y^B} \in [\mathcal{X}]^{\perp_1}$. Hence $[\mathcal{X}]$ is special precovering.

The equality $[\mathcal{Y}] = [\mathcal{X}]^{\perp_1}$ and the fact that $[\mathcal{Y}]$ is special preenveloping follow dually.
\end{proof}

Since every exact category can be viewed as an extriangulated category, Theorem \ref{main11} yields the following consequence.

\begin{corollary}{\rm\cite[Theorem 28]{FAHT}}
Let $\mathcal{B}$ be an exact category, and let $\mathcal{X}, \mathcal{Y}$ be subcategories of $\mathcal{B}$. If $(\mathcal{X},\mathcal{Y})$ is a complete cotorsion pair in $\mathcal{B}$, then $([\mathcal{X}],[\mathcal{Y}])$ is a complete ideal cotorsion pair in $\mathcal{B}$.
\end{corollary}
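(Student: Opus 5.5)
This corollary follows directly from Theorem~\ref{main11}. The plan is to regard the exact category $\mathcal{B}$ as an extriangulated category, check that the notions in the statement match, and then cite the theorem.

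First, take the extriangulated structure $(\mathcal{B},\mathbb{E},\mathfrak{s})$ with $\mathbb{E}=\mathrm{Ext}^1_{\mathcal{B}}(-,-)$, the Yoneda extension groups of conflations. Let $\mathfrak{s}$ send an extension class to the equivalence class of the short exact sequence representing it. By \cite[Example 2.13]{NP} this is an extriangulated category. Its conflations are exactly the short exact sequences, its inflations and deflations are the admissible monics and epics, and $\mathbb{E}(f,g)=\mathrm{Ext}^1(f,g)$ as in the exact setting.

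Second, check that the definitions in \cite{FAHT} agree with those of this paper under this identification.
\begin{itemize}
\item A complete cotorsion pair in $\mathcal{B}$ means $\mathrm{Ext}^1(\mathcal{X},\mathcal{Y})=0$ together with special approximation sequences on both sides. This is exactly Definition 4.1 of \cite{NP} as recalled above.
\item The orthogonal ideals ${}^{\perp_1}\mathcal{J}$ and $\mathcal{I}^{\perp_1}$ are defined by the vanishing of $\mathrm{Ext}^1(i,j)$, which is $\mathbb{E}(i,j)$.
\item A special $\mathcal{I}$-precover in \cite{FAHT} is a deflation $f$ obtained as the pushout of an extension $\lambda$ along a morphism $h\in\mathcal{I}^{\perp}$. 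This is precisely the morphism of $\mathbb{E}$-triangles with $h_*\lambda$ used here.
\item Special preenvelopes are handled dually.
\item The object ideals $[\mathcal{X}]$ and $[\mathcal{Y}]$ are defined identically in both settings.
\end{itemize}

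Third, apply Theorem~\ref{main11} to $(\mathcal{B},\mathbb{E},\mathfrak{s})$. It gives that $([\mathcal{X}],[\mathcal{Y}])$ is a complete ideal cotorsion pair in the extriangulated sense. By the dictionary in the second step, this is the exact-category statement.

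The only step that needs any care is the second one, specifically matching the pushout description of special precovers in \cite{FAHT} with the $\mathbb{E}$-triangle morphism description. This follows from Remark~\ref{factorization of E-extensions}, which says that in the exact case the middle row $h_*\lambda$ is realized by the pushout sequence. Nothing else should present an obstacle.
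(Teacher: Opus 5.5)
Your proposal is correct and follows the paper's own proof: the paper also just regards the exact category as an extriangulated category with $\mathbb{E}=\mathrm{Ext}^1$ and applies Theorem~\ref{main11}, and your dictionary check goes into more detail than the paper does. One small correction: the identification of the bottom row $h_*\lambda$ with the pushout sequence follows from the definition of Yoneda functoriality of $\mathrm{Ext}^1$ (or from the standard fact that a morphism of conflations that is the identity on the third term makes the left square a pushout), not from Remark~\ref{factorization of E-extensions}.
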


We have just established that every complete cotorsion pair gives rise to a complete ideal cotorsion pair via the ideal generation construction $(\mathcal{X},\mathcal{Y}) \mapsto ([\mathcal{X}],[\mathcal{Y}])$. A natural question arises: does the converse hold? That is, given a complete ideal cotorsion pair $([\mathcal{X}],[\mathcal{Y}])$ generated by subcategories $\mathcal{X}$ and $\mathcal{Y}$, can we conclude that $(\mathcal{X},\mathcal{Y})$ is a complete cotorsion pair?
To prove this, we need the following some preparations.

We recall some notions. Let $\mathcal A$ be an additive category. A morphism $e \colon A \to A$ is called idempotent if $e^2 = e$. The category $\mathcal A$ is said to be idempotent complete (or to have split idempotents) if every idempotent morphism in $\mathcal A$ has a kernel.

An additive category $\mathcal A$ is weakly idempotent complete if every retraction has a kernel.

\begin{proposition}\label{prop11}
Let $\mathcal{A}$ be an additive category. If $\mathcal{A}$ is idempotent complete,
then $\mathcal{A}$ is weakly idempotent complete.
\end{proposition}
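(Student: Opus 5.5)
Given a retraction $r\colon A\to B$ with a section $s\colon B\to A$ satisfying $rs=1_B$, I will build a kernel of $r$ by splitting the idempotent complementary to $sr$. Put $e=1_A-sr$. Then
$$e^2=1_A-2sr+s(rs)r=1_A-sr=e,$$
so $e$ is idempotent. Also $re=r-rsr=0$ and $es=s-srs=0$.

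Since $\mathcal A$ is idempotent complete, the idempotent $1_A-e=sr$ has a kernel $k\colon K\to A$. My plan is to show that $K$ splits $e$, i.e.\ that $e$ factors as $e=k\pi$ with $\pi k=1_K$, and then that $k$ is a kernel of $r$.

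First, the factorization. Because $sr\,e=s(re)=0$, the morphism $e$ factors uniquely through $k=\ker(sr)$: there is a unique $\pi\colon A\to K$ with $e=k\pi$. Next, $srk=0$ gives $ek=k-srk=k$. Hence $k\pi k=ek=k$, and since $k$ is a monomorphism (being a kernel), $\pi k=1_K$. So $(K,k,\pi)$ splits $e$.

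Now I verify that $k$ is a kernel of $r$. It kills $r$: $rk=rek=(re)k=0$. For the universal property, let $g\colon T\to A$ satisfy $rg=0$. Then $srg=0$, so $g$ factors through $k$ by the kernel property of $k$. That factorization is unique because $k$ is monic. In fact $\ker(sr)=\ker(r)$ directly: $r=r(sr)$ since $rs=1_B$, and $s$ is a split mono, so $rg=0\iff srg=0$.

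So the whole argument reduces to this observation, and there is essentially no obstacle. I could even shortcut the splitting step: the definition in the paper only requires that the idempotent $sr$ have a kernel, and $\ker(sr)=\ker(r)$ holds immediately because $s$ is a monomorphism. The only point needing care is using the paper's convention that ``idempotent complete'' means every idempotent has a kernel. I apply that convention to the idempotent $sr$ (rather than to $e$) so the kernel obtained is literally the kernel of $r$.
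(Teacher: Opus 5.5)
Your proof is correct and is essentially the paper's argument: take a kernel $k$ of the idempotent $sr$, deduce $rk=0$ from $rk=rsrk$, and note that $rg=0$ implies $srg=0$, so $g$ factors through $k$. The detour through $e=1_A-sr$ and the splitting $\pi k=1_K$ is valid but not needed, as you already observe.
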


\begin{proof}
Let $r: A \to B$ be a retraction. Then there exists $s: B \to A$ such that
$r s = \mathrm{id}_B$.
Set $p := s r: A \to A$. Then we have
$$
p^2 = (sr)(sr) = s (rs)r = s\mathrm{id}_Br = sr = p.
$$
Thus $p$ is an idempotent morphism.
Since $\mathcal{A}$ is idempotent complete, $p$ has a kernel. Let
$k: K \to A$
be a kernel of $p$. Then $p \circ k = 0$.

We claim that $k: K \to A$ is also a kernel of $r$.

We first show that $rk = 0$.
Since $pk = 0$ and $p = sr$, we have $srk = 0$. Then
$$
rk = \mathrm{id}_B(rk) = (rs)(rk) = r(s rk) = r0 = 0.
$$
Now let $f: X \to A$ be any morphism such that $r \circ f = 0$.
Then
$$
p  f = s rf = s0 = 0.
$$
Since $k: K \to A$ is a kernel of $p$, the universal property of kernels gives a unique morphism $g: X \to K$ such that
$f = k \circ g$.
This shows that $k: K \to A$ is a kernel of $r$. Hence $\mathcal{A}$ is weakly idempotent complete.
\end{proof}

Recall that an additive category is called Krull-Schmidt if every object decomposes as a finite direct sum of indecomposable objects with local endomorphism rings, and this decomposition is unique up to isomorphism and permutation of summands.

By \cite[Corollary 4.4]{Krause} (see also \cite[Theorem A.1]{CYZ}), we know that an additive category $\mathcal{A}$ is Krull-Schmidt if and only if it is idempotent complete and ${\rm End}_{\mathcal{A}}(X)$ is semiperfect for every object $X \in \mathcal{A}$. Consequently, by Proposition \ref{prop11},  every Krull-Schmidt additive category is weakly idempotent complete.

\begin{definition}\cite[Condition 5.8]{NP}
Let $(\mathcal{C}, \mathbb{E}, \mathfrak{s})$ be an extriangulated category. We say that $\mathcal{C}$ satisfies \textbf{condition (WIC)} if the following statements hold:
\begin{enumerate}
\item[(1)] For any composable morphisms $f \in \mathcal{C}(X,Y)$ and $g \in \mathcal{C}(Y,Z)$, if $gf$ is an inflation, then $f$ is an inflation.
\item[(2)] For any composable morphisms $f \in \mathcal{C}(X,Y)$ and $g \in \mathcal{C}(Y,Z)$, if $gf$ is an deflation, then $g$ is an deflation.
\end{enumerate}
\end{definition}

\begin{lemma}{\rm \cite[Proposition 3.7]{K}}\label{lemwic}
 An extriangulated category satisfies {\bf condition
(WIC)} if and only if it is weakly idempotent complete.
\end{lemma}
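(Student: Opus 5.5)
The plan is to prove the two implications separately. Both reduce to statements about split morphisms. The key tools are the long exact sequences of Proposition~\ref{FactExact} and the closure property in Proposition~\ref{cor3.16}. Throughout, I use that isomorphisms are inflations and deflations, and that composing a conflation with isomorphisms of its terms gives again a conflation.

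\emph{(WIC) $\Rightarrow$ weakly idempotent complete.} Let $r\colon A\to B$ be a retraction with $rs=1_B$. Since $1_B$ is a deflation, via the $\E$-triangle $0\to B\xrightarrow{1}B\dra$, condition (WIC)(2) applied to $rs$ shows that $r$ is a deflation. So there is an $\E$-triangle $K\xrightarrow{x}A\xrightarrow{r}B\overset{\delta}{\dra}$. By Proposition~\ref{FactExact}, $\C(B,A)\to\C(B,B)\xrightarrow{\delta_\sharp}\E(B,K)$ is exact. Since $1_B=rs$ lies in the image, we get $\delta=1_B^{*}\delta=0$, so the triangle splits and $A\cong K\oplus B$ with $x$ a split monomorphism. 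The same exact sequence, applied to $\C(-,K)\to\C(-,A)\to\C(-,B)$ with $x$ monic, then shows that $x$ is a kernel of $r$.

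\emph{Weakly idempotent complete $\Rightarrow$ (WIC).} I treat (2); (1) is dual. Suppose $gf$ is a deflation, where $f\colon X\to Y$ and $g\colon Y\to Z$. By Proposition~\ref{cor3.16}(2), $(gf\ \ g)\colon X\oplus Y\to Z$ is a deflation. Composing with the automorphism $\left(\begin{smallmatrix}1&0\\-f&1\end{smallmatrix}\right)$ of $X\oplus Y$ shows that $(0\ \ g)\colon X\oplus Y\to Z$ is a deflation. Let $K\xrightarrow{k}X\oplus Y\xrightarrow{(0\ g)}Z\overset{\delta}{\dra}$ be the corresponding $\E$-triangle. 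Since $(0\ g)\iota_X=0$, exactness gives $u\colon X\to K$ with $ku=\iota_X$. Then $p_Xk$ is a retraction of $K$ onto $X$, with section $u$. Weak idempotent completeness now gives a kernel $K'\to K$ of $p_Xk$, and hence a decomposition $K\cong X\oplus K'$ compatible with $k$.

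The main obstacle is the last step: passing from the $\E$-triangle above to one of the form $K'\to Y\xrightarrow{g}Z\dra$. The first thing I would try is the following. Compare the given triangle with the direct sum of the split triangle $X\xrightarrow{1}X\to 0\dra$ and a candidate triangle for $g$. To obtain the candidate, take the composite $Y\xrightarrow{\iota_Y}X\oplus Y$ and pull $\delta$ back along $1_Z$ using the obvious map of triangles. Then use (ET3)/(ET4)$^{\rm op}$ together with the decomposition $K\cong X\oplus K'$ to identify the middle term with $Y$ and the deflation with $g$.

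An alternative is to write the morphism $K\to X\oplus Y$ in matrix form as $\left(\begin{smallmatrix}1&0\\ a&b\end{smallmatrix}\right)$ with respect to $X\oplus K'$, after absorbing an automorphism. Then check directly, via the exact sequences of Proposition~\ref{FactExact}, that $K'\xrightarrow{b}Y\xrightarrow{g}Z$ realizes $\iota_Y^{*}$-restricted data of $\delta$. Here I would invoke additivity of the realization $\s$, which sends direct sums of extensions to direct sums of conflations.
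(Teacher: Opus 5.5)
The paper gives no proof of this lemma; it quotes [K, Proposition 3.7]. So I judge your argument on its own merits.

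\textbf{What is correct.} The implication (WIC) $\Rightarrow$ weakly idempotent complete is correct. So is the first half of the converse: $(0\ g)$ is a deflation, and splitting off $X$ from $K$ is legitimate. With $\phi=(u\ k')\colon X\oplus K'\to K$ you get exactly $k\phi=\left(\begin{smallmatrix}1&0\\0&b\end{smallmatrix}\right)$, where $b=p_Ykk'$, because $p_Xkk'=0$.

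\textbf{The gap.} The step you call the main obstacle is the whole content of this direction, and neither sketch closes it.
\begin{itemize}
\item In your first route, pulling $\delta$ back along $1_Z$ does nothing. A map of triangles with $\iota_Y$ in the middle presupposes a conflation ending in $g$, which is what you are trying to construct. (ET4)$^{\rm op}$ needs $g$ to be a deflation already. (ET3) or the realization axioms only give a morphism $(p_{K'},m,1_Z)$ into a realization of some extension. You cannot conclude that the induced $Y\to M$ is an isomorphism, since the two-out-of-three property [NP, Corollary 3.6] needs two of the three components to be isomorphisms, and $p_{K'}$ is not one.
\item In your second route, ``$\iota_Y^{*}$-restricted data of $\delta$'' is not defined, because $\iota_Y$ does not map into $Z$. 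Moreover, the exact sequences of Proposition~\ref{FactExact} can never certify that a given pair $K'\to Y\to Z$ is a conflation. That requires comparing the pair with an actual realization.
\end{itemize}

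\textbf{How to close it.} Put $\delta'=(\phi^{-1})_*\delta\in\mathbb{E}(Z,X\oplus K')$. Then $X\oplus K'\xrightarrow{1\oplus b}X\oplus Y\xrightarrow{(0\ g)}Z\overset{\delta'}{\dashrightarrow}$ is an $\mathbb{E}$-triangle, where $1\oplus b=\left(\begin{smallmatrix}1&0\\0&b\end{smallmatrix}\right)$.
\begin{enumerate}
\item[(i)] The $X$-component vanishes. The projection $X\oplus K'\to X$ equals the projection $X\oplus Y\to X$ composed with $1\oplus b$. By exactness of $\mathcal{C}(X\oplus Y,-)\to\mathcal{C}(X\oplus K',-)\xrightarrow{\delta'^{\sharp}}\mathbb{E}(Z,-)$, its pushforward of $\delta'$ is $0$. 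Hence, after identifying $Z\cong 0\oplus Z$, $\delta'$ is the direct sum of $0\in\mathbb{E}(0,X)$ and $\epsilon=(p_{K'})_*\delta'$.
\item[(ii)] Let $\mathfrak{s}(\epsilon)=[K'\xrightarrow{c}M\xrightarrow{d}Z]$. Additivity of $\mathfrak{s}$ gives $\mathfrak{s}(\delta')=[X\oplus K'\xrightarrow{1\oplus c}X\oplus M\xrightarrow{(0\ d)}Z]$. So there is an isomorphism $\theta\colon X\oplus M\to X\oplus Y$ with $\theta(1\oplus c)=1\oplus b$ and $(0\ g)\theta=(0\ d)$.
\item[(iii)] Cancel $X$. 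An abstract isomorphism $X\oplus M\cong X\oplus Y$ would not give $M\cong Y$. Here, however, $\theta\iota_X=\iota_X$ forces $\theta=\left(\begin{smallmatrix}1&\theta_{12}\\0&\theta_{22}\end{smallmatrix}\right)$. Hence $\theta_{22}$ is invertible, with $\theta_{22}c=b$ and $g\theta_{22}=d$.
\end{enumerate}
Therefore $[K'\xrightarrow{b}Y\xrightarrow{g}Z]=\mathfrak{s}(\epsilon)$, and $g$ is a deflation. With (i)--(iii) your outline becomes a complete proof. Part (1) then follows dually, using that weak idempotent completeness is self-dual (every section has a cokernel).
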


\begin{lemma}{\rm\cite[Corollary 1.4 and its dual]{KS}}\label{right}
Let $\mathcal{A}$ be a Krull-Schmidt additive category. Then
\begin{enumerate}
\item[\rm (1)] For any morphism $f: X \to Y$ in $\mathcal{A}$, there exists a decomposition $X = X_1 \oplus X_2$, unique up to isomorphism, such that
    $f=(f_1,0)$ where $f_1: X_1 \to Y$ is right minimal;
\item[\rm (2)] For any morphism $f: X \to Y$ in $\mathcal{A}$, there exists a decomposition $Y = Y_1 \oplus Y_2$, unique up to isomorphism, such that
    $f=\binom{g_1}{0}$ where $g_1: X \to Y_1$ is left minimal.
\end{enumerate}
\end{lemma}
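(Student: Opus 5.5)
The plan is to prove (1) directly and obtain (2) by duality, passing to $\mathcal A^{\rm op}$, which is again Krull--Schmidt. The only inputs are the characterization quoted above, namely that $\mathcal A$ is idempotent complete with semiperfect endomorphism rings \cite{Krause}, and the uniqueness of decompositions into indecomposables.

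\emph{Existence.} Write $X$ as a finite direct sum of indecomposables. Among all decompositions $X=X_1\oplus X_2$ with $f|_{X_2}=0$, choose one for which the number of indecomposable summands of $X_1$ is minimal. Such decompositions exist, for instance $X_1=X$, $X_2=0$. Put $f_1=f|_{X_1}$, so that $f=(f_1,0)$. To see that $f_1$ is right minimal, let $\varphi\in R:={\rm End}(X_1)$ satisfy $f_1\varphi=f_1$, and set $u=1-\varphi$, so that $f_1u=0$. There are two cases.
\begin{itemize}
\item If $u\in J(R)$, then $\varphi=1-u$ is invertible, which is what we want.
\item If $u\notin J(R)$, then $uR\not\subseteq J(R)$. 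Since $R$ is semiperfect, $R/J(R)$ is semisimple and idempotents lift modulo $J(R)$. Hence the right ideal $uR$ contains a nonzero idempotent $e=ur$. Because $\mathcal A$ is idempotent complete, $e$ splits: $X_1=X_1'\oplus X_1''$ with $X_1'\cong{\rm Im}\,e\neq 0$. Moreover $f_1e=f_1ur=0$, so $f_1$ vanishes on $X_1'$. Then $X=X_1''\oplus(X_1'\oplus X_2)$ is a decomposition with $f|_{X_1'\oplus X_2}=0$ in which $X_1''$ has strictly fewer indecomposable summands than $X_1$. This contradicts minimality, so this case cannot occur.
\end{itemize}

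The main obstacle is the ring-theoretic step in the second case: producing a nonzero idempotent inside $uR$ from $u\notin J(R)$. I would justify it as follows. The image $\bar u\bar R$ of $uR$ is a nonzero right ideal of the semisimple ring $\bar R=R/J(R)$, so it equals $\bar\epsilon\bar R$ for some nonzero idempotent $\bar\epsilon$. Write $\bar\epsilon=\bar u\bar s$. Then $a:=us\in uR$ satisfies $a^2-a\in J(R)$. Lift this idempotent by the standard polynomial-in-$a$ construction, which keeps the lift in $aR\subseteq uR$. This is precisely where semiperfectness, and hence the Krull--Schmidt hypothesis, is used.

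\emph{Uniqueness.} Suppose $X=X_1\oplus X_2=X_1'\oplus X_2'$ are two such decompositions, with $f_1,f_1'$ right minimal. Let $\iota,\iota'$ be the inclusions and $\pi,\pi'$ the projections. Since $f$ vanishes on $X_2'$, we get $f_1=f\iota=f_1'\pi'\iota$, and similarly $f_1'=f_1\pi\iota'$. Hence $f_1=f_1(\pi\iota'\pi'\iota)$. Right minimality forces $\pi\iota'\pi'\iota$ to be an automorphism of $X_1$, and symmetrically $\pi'\iota\pi\iota'$ is an automorphism of $X_1'$. It follows that $\pi'\iota\colon X_1\to X_1'$ is a split monomorphism and also a split epimorphism, hence an isomorphism $X_1\cong X_1'$. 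The Krull--Schmidt cancellation property then gives $X_2\cong X_2'$.

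Part (2) is the dual statement, obtained by applying (1) in $\mathcal A^{\rm op}$, where left minimal morphisms correspond to right minimal ones.
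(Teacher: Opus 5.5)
Most of your argument is correct: the descent on the number of indecomposable summands of $X_1$ (well defined by Krull--Schmidt uniqueness), the splitting of $e$ and the resulting contradiction, the uniqueness argument, and the passage to $\mathcal A^{\rm op}$. In the uniqueness argument both $\pi\iota'\pi'\iota$ and $\pi'\iota\pi\iota'$ are automorphisms, so $\pi'\iota$ has a left and a right inverse, and cancellation then gives $X_2\cong X_2'$. The gap is exactly at the step you flag. The polynomial lift $\sum_{i=m}^{2m-1}\binom{2m-1}{i}a^i(1-a)^{2m-1-i}$ is an idempotent only when $(a-a^2)^m=0$, so it lifts idempotents modulo nil ideals; an iterative version converges only if $R$ is $J$-adically complete. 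In a Krull--Schmidt category $J({\rm End}(X_1))$ need not be nil; already in ${\rm mod}\,k[[x]]$ one has ${\rm End}(k[[x]])=k[[x]]$ with radical $(x)$. Semiperfectness only asserts that \emph{some} idempotent lifts $\bar a$, with no control over whether it lies in $aR$. So, as written, you have not produced a nonzero idempotent in $uR$.

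The claim is nevertheless true and follows from the bare lifting property. Take any idempotent $e'\in R$ with $\overline{e'}=\bar\epsilon$ (so $e'\neq 0$), write $e'=us+j$ with $j\in J(R)$, and set $x:=use'=e'-je'\in uR$. Since $J(e'Re')=e'J(R)e'$, the element $e'x=e'-e'je'$ is a unit of the corner ring $e'Re'$. If $v\in e'Re'$ is its inverse, then $vx=ve'x=e'$. Hence $e:=xv\in uR$ satisfies $e^2=x(vx)v=xe'v=e$ and $vex=e'\neq 0$, so $e$ is a nonzero idempotent in $uR$. Alternatively, write $X_1=\bigoplus Z_i$ with the $Z_i$ indecomposable. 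Then $u\notin J(R)$ means some component $\theta=\pi_k u\iota_l\colon Z_l\to Z_k$ is an isomorphism, by the usual matrix description of the radical, and $u\iota_l\theta^{-1}\pi_k$ is already a nonzero idempotent in $uR$.

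With this repair your proof is a valid self-contained argument; the paper gives no proof and simply cites \cite{KS}. For comparison, the standard proof works with the right ideal $I_f=\{\alpha\in{\rm End}(X)\mid f\alpha=0\}$. There $f$ is right minimal if and only if $I_f\subseteq J({\rm End}(X))$, and semiperfectness yields an idempotent $e\in I_f$ with $(1-e)I_f\subseteq J$, which splits $X$ in one step. That route needs only split idempotents and a semiperfect ${\rm End}(X)$. Your descent instead relies on the finiteness and uniqueness of Krull--Schmidt decompositions, in exchange for the weaker ring-theoretic input that a right ideal not contained in $J$ contains a nonzero idempotent.
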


We now turn to the converse of Theorem~\ref{main11}. In full generality, this converse remains open. The difficulty lies in the fact that, for an arbitrary extriangulated category, a special $[\mathcal{X}]$-precover need not restrict to a special $\mathcal{X}$-precover on the object level, since the domain of the precover may not decompose into an $\mathcal{X}$-part and a $[\mathcal{X}]^{\perp_1}$-part.
However, if $\mathcal{C}$ is Krull-Schmidt, every object admits a unique decomposition into indecomposable summands, and every morphism admits a right minimal decomposition. This allows us to extract an $\mathcal{X}$-precover from a given $[\mathcal{X}]$-precover. We thus obtain the following positive result.

\begin{theorem}\label{main12}
Let $(\mathcal{C},\mathbb{E},\mathfrak{s})$ be a Krull--Schmidt extriangulated category, and let $\mathcal{X}, \mathcal{Y}$ be subcategories of $\mathcal{C}$. If $([\mathcal{X}],[\mathcal{Y}])$ is a complete ideal cotorsion pair in $\mathcal{C}$, then $(\mathcal{X},\mathcal{Y})$ is a complete cotorsion pair in $\mathcal{C}$.
\end{theorem}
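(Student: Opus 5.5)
Write $\widetilde{\mathcal X}={\rm Ob}[\mathcal X]$ and $\widetilde{\mathcal Y}={\rm Ob}[\mathcal Y]$; these are the closures of $\mathcal X,\mathcal Y$ under direct summands. Since the definition of a complete cotorsion pair requires closure under summands and isomorphisms, we will tacitly identify $\mathcal X$ with $\widetilde{\mathcal X}$ and $\mathcal Y$ with $\widetilde{\mathcal Y}$. In a Krull--Schmidt category an object $Z$ lies in $\widetilde{\mathcal X}$ iff $1_Z$ factors through an object of $\mathcal X$, i.e. iff $Z$ is a summand of an object of $\mathcal X$.

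\textbf{Orthogonality.} For $X\in\mathcal X$ and $Y\in\mathcal Y$ we have $1_X\in[\mathcal X]={}^{\perp_1}[\mathcal Y]$ and $1_Y\in[\mathcal Y]$. Hence $\mathbb E(1_X,1_Y)=0$, so $\mathbb E(X,Y)=0$.

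\textbf{Approximation triangles.} Fix $C\in\mathcal C$ and take a special $[\mathcal X]$-precover $f\colon X\to C$. It is a deflation sitting in a morphism of $\mathbb E$-triangles
$$\xymatrix{A \ar[r] \ar[d]_{h} &B \ar[r] \ar[d] &C \ar@{=}[d] \ar@{-->}[r]^{\lambda} &\\ K \ar[r]^{k} &X \ar[r]^{f} &C \ar@{-->}[r]^{h_{*}\lambda} &,}$$
with $h\in[\mathcal X]^{\perp_1}=[\mathcal Y]$.

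By Lemma \ref{right}(1), write $X=X_1\oplus X_2$ with $f=(f_1,0)$ and $f_1$ right minimal. The first key step is to show that $f_1\colon X_1\to C$ is still a deflation. Because $f=f_1\pi_1$ is a deflation and $\mathcal C$ is Krull--Schmidt, hence weakly idempotent complete, condition (WIC) from Lemma \ref{lemwic} makes $f_1$ a deflation. Complete it to an $\mathbb E$-triangle $K_1\to X_1\stackrel{f_1}\to C\stackrel{\delta}\dashrightarrow$.

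Next we show $X_1\in\widetilde{\mathcal X}$. Since $f\in[\mathcal X]$, we have $f=ba$ with $a\colon X\to X'$, $b\colon X'\to C$ and $X'\in\mathcal X$. Moreover $b\in[\mathcal X]$, so $b$ factors through $f$ as $b=fc$, using that $f$ is an $[\mathcal X]$-precover. Then $f_1=f\iota_1=fca\iota_1=f_1(\pi_1ca\iota_1)$. Right minimality makes $\pi_1ca\iota_1$ an automorphism of $X_1$, so $1_{X_1}$ factors through $X'$ and $X_1\in\widetilde{\mathcal X}$.

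\textbf{Main obstacle: $K_1\in\widetilde{\mathcal Y}$.} It suffices to show $1_{K_1}\in[\mathcal Y]=[\mathcal X]^{\perp_1}$, i.e. $\mathbb E^{}(g,1_{K_1})=0$ for every $g\in[\mathcal X]$, and for this it is enough that $\mathbb E(X'',K_1)=0$ for all $X''\in\mathcal X$. Take $\theta\in\mathbb E(X'',K_1)$ and realize it as $K_1\to E\to X''\stackrel{\theta}\dashrightarrow$.

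The plan is to relate $K_1$ to $K$. The triangle $K_1\to X_1\to C$ is a direct summand of $K\to X\to C$ up to the split piece: one can check $K\cong K_1\oplus X_2$ by comparing the two triangles via $\iota_1,\pi_1$. Alternatively, $\delta$ and $h_*\lambda$ are related by the morphism induced from $\pi_1$ and $\iota_1$, so that $\delta=p_*h_*\lambda$ for some $p\colon K\to K_1$. Then for $x\in[\mathcal X]$ with target $C$,
$$x^*\delta=p_*h_*x^*\lambda=0,$$
because $h\in[\mathcal X]^{\perp_1}$. Using the precover property of $f_1$ together with Proposition \ref{Ext=0 and factorization}, this should give that every morphism from an object of $\mathcal X$ into $K_1$'s extensions vanishes. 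Right minimality of $f_1$ is what rules out a spurious non-$\mathcal Y$ summand of $K_1$.

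If that argument gets stuck, the fallback is to use $p_*h_*=1_*$ on $\delta$ to split $1_{K_1}$ off through $h$ modulo a radical morphism. Since $1_{K_1}-p h q$ is then radical, Krull--Schmidt locality shows $1_{K_1}$ factors through an object of $\mathcal Y$ (as $h\in[\mathcal Y]$). This gives $K_1\in\widetilde{\mathcal Y}$ and hence the triangle $Y^C\to X^C\to C$.

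The triangle $C\to Y_C\to X_C\dashrightarrow$ is obtained dually, using Lemma \ref{right}(2), a special $[\mathcal Y]$-preenvelope, and left minimality.
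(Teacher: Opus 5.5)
\textbf{Verdict: there is a genuine gap, at the step you yourself call the main obstacle, namely $K_1\in\widetilde{\mathcal Y}$. That step is the heart of the theorem, and the proposal does not prove it.}

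The rest of your argument is sound. This includes the orthogonality step and the use of (WIC) to see that $f_1$ is a deflation. It also includes your minimality argument showing $X_1\in\widetilde{\mathcal X}$, which is a valid variant of the paper's route (the paper instead decomposes the factor $X\to M$ with $X\in\mathcal X$). Finally, the identity $\delta=(ph)_*\lambda$, obtained from (ET3)$^{\rm op}$ applied to $(\pi_1,1_C)$, is correct.

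\textbf{Your first attempt.} It only yields $x^*\delta=0$ for $x\in[\mathcal X]$, which is just the precover property of $f_1$ again. Writing $k_1\colon K_1\to X_1$ for the inflation, this gives injectivity of $(k_1)_*\colon\mathbb E(X'',K_1)\to\mathbb E(X'',X_1)$. Nothing you state forces the image to vanish, and ``this should give'' is exactly where the proof is missing.

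\textbf{Your fallback.} It uses a morphism $q$ that is never constructed. Read literally, $q\colon K_1\to A$ (so that $phq$ is defined) with $q_*\delta=\lambda$. That is a morphism of $\mathbb E$-triangles from $K_1\to X_1\to C$ to $A\to B\to C$ over $1_C$, and such a $q$ exists iff $f_1^*\lambda=0$. The definition of a special precover imposes orthogonality only on $h$, not on $\lambda$ or $B$, so $q$ need not exist. You also assert that $1_{K_1}-phq$ is radical without giving a reason.

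\textbf{How the paper closes the step.} It uses Wakamatsu's lemma. From $[\mathcal X]={}^{\perp_1}[\mathcal Y]$ it gets ${}^{\perp_1}\mathcal Y=\widetilde{\mathcal X}$, so $\widetilde{\mathcal X}$ is extension-closed. Then a right minimal $\widetilde{\mathcal X}$-precover that is a deflation has cocone in $\widetilde{\mathcal X}^{\perp_1}=\widetilde{\mathcal Y}$ \cite[Lemma 3.3]{CZZ}.

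\textbf{Repairing your fallback.} Your fallback can also be repaired, giving a Wakamatsu-free proof, if you route through an object of $\mathcal Y$ instead of through $A$.
\begin{enumerate}
\item Write $ph=y_2y_1$ with $y_1\colon A\to Y$, $y_2\colon Y\to K_1$ and $Y\in\mathcal Y$.
\item Since $f_1=f\iota_1\in[\mathcal X]$ and $y_1\in[\mathcal X]^{\perp_1}$, you get $f_1^*(y_1)_*\lambda=\mathbb E(f_1,y_1)(\lambda)=0$.
\item Exactness of $\mathcal C(K_1,Y)\xrightarrow{\delta^\sharp}\mathbb E(C,Y)\xrightarrow{f_1^*}\mathbb E(X_1,Y)$ then gives $q\colon K_1\to Y$ with $q_*\delta=(y_1)_*\lambda$.
\item Hence $(y_2q)_*\delta=(ph)_*\lambda=\delta$, so $1_{K_1}-y_2q=rk_1$ for some $r\colon X_1\to K_1$.
\item Right minimality of $f_1$ forces $k_1$ to be a radical morphism. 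Otherwise some indecomposable summand of $K_1$ would map split-monomorphically into $X_1$ via $k_1$, producing a nonzero summand of $X_1$ killed by $f_1$.
\item So $rk_1$ lies in the Jacobson radical of ${\rm End}(K_1)$, $y_2q$ is invertible, and $K_1$ is a direct summand of $Y$.
\end{enumerate}
The point you were missing is that the comparison map comes from $\mathbb E(f_1,y_1)=0$, not from any property of $\lambda$. With either fix, together with the dual argument, the proof goes through.
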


\begin{proof}
(1) The condition $\mathbb{E}([\mathcal{X}],[\mathcal{Y}])=0$ immediately implies $\mathbb{E}(\mathcal{X},\mathcal{Y})=0$.

Before proving (2) and (3), we establish the equalities
$$
\mathcal{X} = {}^{\perp_1}\mathcal{Y}~~\text{and}~~\mathcal{Y} = \mathcal{X}^{\perp_1}.
$$
The inclusion $\mathcal{X} \subseteq {}^{\perp_1}\mathcal{Y}$ is immediate. For the reverse inclusion, let $M \in {}^{\perp_1}\mathcal{Y}$. Then $\mathbb{E}(M,\mathcal{Y}) = 0$, which gives $\mathbb{E}(1_M, [\mathcal{Y}]) = 0$. Hence $1_M \in {}^{\perp_1}[\mathcal{Y}] = [\mathcal{X}]$, so $M \in \mathcal{X}$. Thus $\mathcal{X} = {}^{\perp_1}\mathcal{Y}$. The dual argument yields $\mathcal{Y} = \mathcal{X}^{\perp_1}$.

Since $\mathcal{X} = {}^{\perp_1}\mathcal{Y}$ and $\mathcal{Y} = \mathcal{X}^{\perp_1}$, both $\mathcal{X}$ and $\mathcal{Y}$ are automatically closed under direct summands. Indeed, if $X = X_1 \oplus X_2 \in \mathcal{X}$, then for any $Y \in \mathcal{Y}$,
$$
\mathbb{E}(X_1, Y) \oplus \mathbb{E}(X_2, Y) \cong \mathbb{E}(X, Y) = 0,
$$
so $X_1 \in {}^{\perp_1}\mathcal{Y} = \mathcal{X}$. The proof for $\mathcal{Y}$ is dual.

(2) Let $M \in \mathcal{C}$. Since $[\mathcal{X}]$ is special precovering, there exists an $\mathbb{E}$-triangle
$$A\lra B \stackrel{f}{\lra} M \dra,$$
where $f: B \to M$ is a special $[\mathcal{X}]$-precover. As $f \in [\mathcal{X}]$, it factors through some object $X \in \mathcal{X}$, say $f = f_1f_2$ with $f_1: X \to M$ and $f_2: B \to X$. Then $f_1$ is an $\mathcal{X}$-precover of $M$.

Since $\mathcal{C}$ is Krull-Schmidt, the morphism $f_1$ admits a right minimal decomposition
$$
f_1 = (f_1', 0): X = X_1 \oplus X_2 \longrightarrow M,
$$
where $f_1': X_1 \to M$ is right minimal (see Lemma \ref{right}). Moreover, $f_1'$ is a deflation because $f_1 = (f_1',0) = f_1' (1,0)$ (see Lemma \ref{lemwic}). Hence there exists an $\mathbb{E}$-triangle
$$K \lra X_{1} \stackrel{f_{1}^{\prime}}{\lra} M \dra.$$
We claim that $f_1'$ is still an $\mathcal{X}$-precover. Indeed, for any morphism $g: X' \to M$, since $f_1$ is an $\mathcal{X}$-precover, there exists $h = \binom{h_1}{h_2}: X' \to X_1 \oplus X_2$ such that $f_1 h = g$, i.e.,
$$
(f_1', 0) \binom{h_1}{h_2} = f_1' h_1 = g.
$$
Thus $g$ factors through $f_1'$, so $f_1'$ is an $\mathcal{X}$-precover of $M$. Therefore $f_1'$ is a right minimal $\mathcal{X}$-precover. By \cite[Lemma 3.3]{CZZ}, we have $K \in \mathcal{X}^{\perp_1} = \mathcal{Y}$.

Since $X = X_1 \oplus X_2 \in \mathcal{X}$ and $\mathcal{X}$ is closed under direct summands, we have $X_1 \in \mathcal{X}$. Therefore, the $\mathbb{E}$-triangle
$$K \lra X_{1} \stackrel{f_{1}^{\prime}}{\lra} M \dra$$
is the desired one, with $X_1 \in \mathcal{X}$ and $K \in \mathcal{Y}$.

(3) This follows dually to (2).

Hence $(\mathcal{X},\mathcal{Y})$ is a complete cotorsion pair in $\C$.
\end{proof}

Since every exact category is extriangulated, Theorem~\ref{main12} applies in particular to exact categories. We thus obtain the following result.

\begin{corollary}\label{cor11}
Let $\mathcal{B}$ be a Krull-Schmidt exact category, and let $\mathcal{X}, \mathcal{Y}$ be subcategories of $\mathcal{B}$. If $([\mathcal{X}],[\mathcal{Y}])$ is a complete ideal cotorsion pair in $\mathcal{B}$, then $(\mathcal{X},\mathcal{Y})$ is a complete cotorsion pair in $\mathcal{B}$.
\end{corollary}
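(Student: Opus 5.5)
This is a direct specialization of Theorem~\ref{main12}, so the plan is to check that its hypotheses hold. First, we recall that an exact category $\mathcal{B}$ carries a canonical extriangulated structure $(\mathcal{B},\mathbb{E},\mathfrak{s})$ (\cite[Example 2.13]{NP}). Here $\mathbb{E}(C,A)=\mathrm{Ext}^1_{\mathcal{B}}(C,A)$ is the group of equivalence classes of admissible short exact sequences $A\to B\to C$, and $\mathfrak{s}$ sends each class to itself. Under this identification, conflations are exactly the admissible short exact sequences, inflations are admissible monics and deflations are admissible epics. Consequently the notions of $\mathbb{E}$-orthogonality, complete cotorsion pair, special $[\mathcal{X}]$-precover, special $[\mathcal{Y}]$-preenvelope and complete ideal cotorsion pair in $(\mathcal{B},\mathbb{E},\mathfrak{s})$ coincide with the usual exact-category notions of \cite{FAHT}.

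Second, the Krull--Schmidt property is a property of the underlying additive category $\mathcal{B}$ alone. It is therefore unaffected by viewing $\mathcal{B}$ as an extriangulated category, and $(\mathcal{B},\mathbb{E},\mathfrak{s})$ is a Krull--Schmidt extriangulated category.

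Third, we apply Theorem~\ref{main12} to $(\mathcal{B},\mathbb{E},\mathfrak{s})$ with the given $\mathcal{X},\mathcal{Y}$. The hypothesis that $([\mathcal{X}],[\mathcal{Y}])$ is a complete ideal cotorsion pair translates verbatim, so the theorem gives that $(\mathcal{X},\mathcal{Y})$ is a complete cotorsion pair in the extriangulated sense. By the first step this is the same as being a complete cotorsion pair in the exact category $\mathcal{B}$.

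The only point needing care is the translation in the first step: the definitions of special precovers and preenvelopes here are phrased through morphisms of $\mathbb{E}$-triangles and pushouts/pullbacks $h_*\lambda$, $h^*\delta$. We must check that these agree with the pushout/pullback of short exact sequences used in \cite{FAHT}. This holds because $h_*$ and $h^*$ on $\mathrm{Ext}^1$ are given precisely by pushout and pullback of admissible sequences. Weak idempotent completeness, which the proof of Theorem~\ref{main12} uses through Lemma~\ref{lemwic}, is automatic here: $\mathcal{B}$ is Krull--Schmidt, hence idempotent complete, hence weakly idempotent complete by Proposition~\ref{prop11}. So no further obstacle is expected.
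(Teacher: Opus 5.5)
Your proposal is correct and follows the paper's own route: the paper also regards the exact category $\mathcal{B}$ as an extriangulated category and applies Theorem~\ref{main12} directly. Your additional checks are sound and only make explicit what the paper leaves implicit. These are that the extriangulated notions of special precovers and preenvelopes agree with those of \cite{FAHT}, and that Krull--Schmidt implies weak idempotent completeness via Proposition~\ref{prop11}.
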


\begin{remark}
Corollary~\ref{cor11} provides a positive answer to the question raised in \cite[Question 28]{FAHT} in the setting of Krull--Schmidt exact categories. A related result was obtained in \cite[Corollary 1.3]{STWZ} for Krull--Schmidt Frobenius exact categories. However, our approach is entirely different from that of \cite{STWZ}: while \cite{STWZ} relies on stable category techniques specific to the Frobenius setting, our proof is direct and works in the broader context of arbitrary Krull--Schmidt exact categories.
\end{remark}

\section{Ideal $n$-cotorsion pairs in Frobenius extriangulated categories}

\setcounter{equation}{0}

Motivated by the corresponding notions of cotorsion pairs in triangulated categories, exact categories and extriangulated categories, we introduce the following definition.

\begin{definition}
Let $(\C,\E,\s)$ be an extriangulated category.
An ideal pair $(\I,\J)$ is called an ideal $n$-cotorsion pair if the following conditions are satisfied:
\begin{itemize}
\item [(1)] $\I=\bigcap\limits_{k=1}^n {^{\bot_{k}}\J}$;
\item [(2)] $\J=\bigcap\limits_{k=1}^n \I^{\bot_{k}}$;
\item [(3)] $\I$ is precovering and $\J$ is preenveloping.
\end{itemize}
\end{definition}

\begin{remark}
It follows directly from the definitions that an ideal $1$-cotorsion pair is precisely a complete ideal cotorsion pair in the sense of \cite[Definition 3.8]{ZH} for extriangulated categories with enough projectives and injectives. In particular, when $\mathcal{C}$ is an exact category with enough projectives and injectives, an ideal $1$-cotorsion pair coincides with a complete ideal cotorsion pair in the sense of \cite[Page 771]{FAHT}.
\end{remark}

Now we give some examples of ideal $n$-cotorsion pairs.

\begin{example}
Let $(\mathcal{C},\mathbb{E},\mathfrak{s})$ be an extriangulated category with enough projective objects and enough injective objects.
Let $[\mathcal{P}]$, $[\mathcal{I}]$, and ${\rm Mor}(\mathcal{C})$ denote the ideal of morphisms factoring through projective objects, the ideal of morphisms factoring through injective objects, and the class of all morphisms in $\mathcal{C}$, respectively.
Then $([\mathcal{P}],{\rm Mor}(\mathcal{C}))$ and $({\rm Mor}(\mathcal{C}),[\mathcal{I}])$ are ideal $n$-cotorsion pairs.
\end{example}

\begin{proof}
We only prove the former assertion, since the proof of the latter is similar.

	(1) On the one hand, for any morphism $i\colon A \ra B \in [\P],\ j\colon C\ra D \in {\rm Mor}(\C)$, $\E^{k}(i,j)=0$ for any integer $1 \leq k \leq n$ implies $[\P]\subseteq \bigcap\limits_{k=1}^{n}{^{\bot_{k}}{\rm Mor}(\C)}$. On the other hand, let $i\colon A \ra B $ be a morphism in $\bigcap\limits_{k=1}^{n}{^{\bot_{k}}{\rm Mor}(\C)}$.
Since $\C$ has enough projective objects, then there exists an $\E$-triangle
	$$K \lra P \stackrel{p}{\lra} B \stackrel{\delta}{\dra},$$
where $P$ is a projective object.	Applying the functor $\Hom(A,-)$ to the above $\E$-triangle, we obtain the following exact sequence
	$$\Hom(A,P) \stackrel{p_{*}}{\lra} \Hom(A,B) \stackrel{\delta_{\sharp}}{\lra} \E(A,K).$$
It follows that	$\E(i,1_{K})=0$ implies $i \in \ker	(\delta_{\sharp})=\im (p_{*})$. Thus we have $i \in [\P]$ and then $\bigcap\limits_{k=1}^{n}{^{\bot_{k}}{\rm Mor}(\C)} \subseteq [\P].$
	
	(2) ${\rm Mor}(\C)=\bigcap\limits_{k=1}^{n}{[\P]^{\bot_{k}}}$ is obvious.
	\vspace{1mm}

	(3) Since $\C$ has enough projectives, we have that $[\P]$ is precovering.
It is obvious that ${\rm Mor}(\C)$ is preenveloping.

This shows that $([\mathcal{P}],{\rm Mor}(\mathcal{C}))$ is an ideal $n$-cotorsion pair.
\end{proof}

In what follows, unless otherwise stated, we assume that $(\C,\E,\s)$ is a Frobenius extriangulated category. Let $\omega$ denote the full subcategory of projective-injective objects, and let $\underline{\C}=\C/[\omega]$ be the stable category associated with $\C$, where $[\omega]$ is the ideal of morphisms factoring through objects in $\omega$. Moreover, $\X$ and $\Y$ are subcategories of $\C$, and $\I$ is an ideal of $\C$ satisfying $\omega \subseteq {\rm Ob}(\I)$.

To prove the main result of this section, we first establish several preliminary facts concerning ideals under the canonical projection from a Frobenius extriangulated category to its stable category. These facts allow us to compare ideal orthogonality in $\C$ with that in $\underline{\C}$, and to relate the precovering and preenveloping properties of ideals in the two categories. Our main result shows that ideal $n$-cotorsion pairs are both preserved and reflected when passing to the stable category.

\begin{lemma} \label{I-precover,I/w-precover}
	\begin{itemize}
		\item [\rm(1)] $\I$ is precovering in $\C$ if and only if $\I/\omega$ is precovering in $\underline{\C}$.
		\item [\rm(2)] $\I$ is preenveloping in $\C$ if and only if $\I/\omega$ is preenveloping in $\underline{\C}$.
	\end{itemize}
\end{lemma}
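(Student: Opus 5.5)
The plan is to prove (1) directly by lifting and pushing down precovers; (2) is dual, using that $\omega$ consists of injective objects and $\C$ has enough injectives. Throughout, the standing hypothesis $\omega\subseteq{\rm Ob}(\I)$ gives $[\omega]\subseteq\I$. Hence a morphism $g$ of $\C$ lies in $\I$ if and only if $\underline{g}\in\I/\omega$: if $\underline g=\underline{i}$ with $i\in\I$, then $g-i\in[\omega]\subseteq\I$.

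For ``$\Rightarrow$'', let $f\colon X\ra M$ be an $\I$-precover in $\C$. We check that $\underline f$ is an $\I/\omega$-precover in $\underline\C$. Take any $\underline g\colon X'\ra M$ in $\I/\omega$. By the remark above we may choose the representative $g\in\I$. Then $g=fh$ for some $h$, so $\underline g=\underline f\,\underline h$.

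For ``$\Leftarrow$'', let $\underline f\colon X\ra M$ be an $\I/\omega$-precover, with representative $f\in\I$. Choose a deflation $p\colon P\ra M$ with $P\in\omega$, which exists since $\C$ is Frobenius. Consider
$$(f\ p)\colon X\oplus P\lra M.$$
This morphism lies in $\I$: $f\in\I$, and $p$ factors through $P\in\omega\subseteq{\rm Ob}(\I)$. Now take any $g\colon X'\ra M$ in $\I$. Then $\underline g\in\I/\omega$, so $\underline g=\underline f\,\underline h$ for some $h\colon X'\ra X$. Thus $g-fh$ factors through some $Q\in\omega$, say $g-fh=ba$ with $a\colon X'\ra Q$ and $b\colon Q\ra M$. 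Since $Q$ is projective and $p$ is a deflation, Proposition \ref{FactExact} gives the exactness of
$$\C(Q,P)\ra\C(Q,M)\ra\E(Q,K)=0.$$
So $b=pc$ for some $c\colon Q\ra P$. Consequently
$$g=fh+pca=(f\ p)\mv{h}{ca},$$
and $(f\ p)$ is an $\I$-precover of $M$.

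For (2), the same argument works dually. Given an $\I/\omega$-preenvelope $\underline f\colon M\ra Y$, take an inflation $\iota\colon M\ra I$ with $I\in\omega$ and use $\mv{f}{\iota}\colon M\ra Y\oplus I$. The difference $g-hf$ factors as $ba$ through some $Q\in\omega$, and $a\colon M\ra Q$ extends along $\iota$ because $Q$ is injective.

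The only delicate point is the $\omega$-error term in ``$\Leftarrow$''. It is absorbed by adding the projective summand $P$, and this step needs both that $\omega\subseteq{\rm Ob}(\I)$ (so the enlarged morphism stays in $\I$) and that the objects of $\omega$ are projective (respectively injective).
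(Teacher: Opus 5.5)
Your proof is correct and follows the paper's argument. For necessity you push an $\I$-precover down to $\underline{\C}$. For sufficiency you enlarge a lifted $\I/\omega$-precover to $(f\ p)\colon X\oplus P\ra M$ and absorb the $\omega$-error term by projectivity. The only cosmetic difference is that you lift the error term through $p$ itself, whereas the paper lifts it through the deflation $(f\ p)$ obtained from Proposition \ref{cor3.16}; your version is slightly more economical.
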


\begin{proof}
	We just prove that (1) since (2) is the dual of (1).

{\bf Necessity.} For any object M in $\underline{\C}$, there exists an $\I$-precover $f\colon X \ra M$ in $\C$. We shall show that $\underline{f}\colon X \ra M$ is an $\I/\omega$-precover of M in $\underline{\C}$. In fact, for any morphism $\underline{g}\colon X^{\prime}\ra M$ in $\I/\omega$, there exists $g^{\prime}\colon X^{\prime} \ra M \in \I$ such that $\underline{g}=\underline{g^{\prime}}$. It follows that $g-g^{\prime}$ factors through some object $W$ in $\omega$. Assume that $g-g^{\prime}=w_{1}w_{2}$ with $w_{1}\colon W \ra M,\ w_{2}\colon X^{\prime} \ra W $ in $\I$. Then we obtain $g=g^{\prime}+w_{1}w_{2} \in \I$. Therefore there exists $h\colon X^{\prime} \ra X$ such that $g=fh$. Obviously, $\underline{g}=\underline{fh}$.
	
{\bf Sufficiency.} For any object $M$ in $\C$, there exists an $\I/\omega$-precover $\underline{f}\colon X \ra M$ in $\underline{\C}$ and a deflation $p\colon P\ra M$, where $f,\ p \in \I$. We claim that $f^{\prime}=(f\ p)\colon X \oplus P \ra M$ is an $\I$-precover of $M$. In fact, $f^{\prime} \in\I$ since $f^{\prime}=f(1\ 0)+p(0\ 1)$. For any morphism $g\colon X^{\prime} \ra M \in \I$, there is a morphism $\underline{h}\colon X^{\prime} \ra X$ such that $\underline{g}=\underline{fh}$. Since $X=X \oplus P,\ \underline{f}=\underline{f^{\prime}}$ in $\underline{\C}$, there is a morphism $\underline{h^{\prime}}\colon X^{\prime} \ra X\oplus P$ such that $\underline{g}=\underline{f^{\prime}h^{\prime}}$. Then $g-f^{\prime}h^{\prime}$ factors through some object $W \in \omega$, say $g-f^{\prime}h^{\prime}=w_{1}w_{2}$ with $w_{1}\colon W \ra M,\ w_{2}\colon X^{\prime} \ra W$. By Proposition \ref{cor3.16}, $f^{\prime}$ is a deflation, thus there is a morphism $l\colon W \ra X \oplus P$ such that $w_{1}=f^{\prime}l$. It follows that $g=f^{\prime}h^{\prime}+w_{1}w_{2}=f^{\prime}h^{\prime}+f^{\prime}lw_{2}=f^{\prime}(h^{\prime}+lw_{2})$.
\end{proof}

%	\cite[Lemma 4.5]{ZH} shows that if $(\I,\J)$ is an ideal cotorsion pair, then $\I$ is Ext-special precovering if  and only if $\J$ is Ext-special preenveloping. In a triangulated category, every precovering ideal is both Ext-special and $\Hom$-special. Thus, there is no need to distinguish between ideal (co)torsion pair and complete ideal (co)torsion pair in triangulated categories. Especially, an ideal $\I$ is Ext-special precovering if and only if $\I$ is $\Hom$-special precovering if and only if $\I$ is precovering if and only if $(\I,\I{^{\bot_{1}}})$ is a (complete)ideal cotorsion pair if and only if $(\I,\I{^\bot})$ is a (complete)ideal torsion pair \cite{BM}.
%	The conclusion does not hold in a general exact category, but a similar conclusion can be derived by imposing special restrictions on both the category and the ideals in  \cite[Theorem 1.1]{STWZ}.
%	The analogous conclusions also hold in a Frobenius extriangulated category.
%	Especially, \cite[Corollary 1.2]{STWZ} shows that $(\I,\J)$ is a complete ideal cotorsion pair in $\C$ if and only if $(\I/\omega,\J/\omega)$ is a complete ideal cotorsion pair in $\underline{\C}$. Now, we consider the setting of higher ideal extensions.

\begin{lemma} \label{Ext=0 if and only if C=0}
	Let	$\I,\J$ be ideals of $\C$, $\omega$ the subcategory consisting of projective-injective objects. Then $\E^{k}(\I,\J)=0$ in $\C$ if and only if $\underline{\C}(\I/\omega,
	\Sigma^{k}\J/\omega)=0$ in $\underline{\C}$ for any positive integer $k$.
\end{lemma}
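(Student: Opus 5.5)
We will reduce everything to the case $k=1$ by means of the higher extension identification $\E^{k}(i,j)=\E(i,\Sigma^{k-1}j)$ from Section 2. Afterwards we use the fact, recorded at the end of Section 2, that in the Frobenius setting $\E(-,-)\cong\underline{\C}(-,\Sigma -)$, where $\Sigma$ is the shift of the stable triangulated category. Concretely, fix morphisms $i\colon A\ra B$ in $\I$ and $j\colon X\ra Y$ in $\J$. The claim will follow once we show
$$\E^{k}(i,j)=0 \iff \underline{\C}(\underline{i},\Sigma^{k}\underline{j})=0,$$
where the right-hand side denotes the map $\underline{\C}(B,\Sigma^{k}X)\ra\underline{\C}(A,\Sigma^{k}Y)$, $\underline{u}\mapsto \Sigma^{k}\underline{j}\,\underline{u}\,\underline{i}$. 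We will also need the observation from the proof of Lemma \ref{I-precover,I/w-precover}: every morphism in $\I/\omega$ lifts to a morphism in $\I$, and likewise for $\J$. With it, quantifying over $\I,\J$ is the same as quantifying over $\I/\omega,\J/\omega$. For $\J$ this uses only that a morphism of $\C$ whose class lies in $\J/\omega$ has a representative in $\J$, which is the definition of $\J/\omega$.

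The first step is the case $k=1$. Since $\C$ is Frobenius, the cosyzygy triangle $X\ra I(X)\ra\Sigma X\stackrel{\delta_X}{\dra}$ has $I(X)\in\omega$. Given $\theta\in\E(B,X)$, realize it as $X\ra E\ra B$. Because $I(X)$ is injective, we obtain a morphism of $\E$-triangles to the cosyzygy triangle with last component $h_\theta\colon B\ra\Sigma X$ and $\theta=h_\theta^{*}\delta_X$. By Proposition \ref{FactExact} applied to the cosyzygy triangle, the assignment $\underline{h}\mapsto h^{*}\delta_X$ is a well-defined isomorphism $\underline{\C}(B,\Sigma X)\cong\E(B,X)$: the kernel of $\C(B,\Sigma X)\ra\E(B,X)$ is the image of $\C(B,I(X))$, and this image is exactly $\omega(B,\Sigma X)$, because any map $B\ra\Sigma X$ factoring through an object of $\omega$ factors through the deflation $I(X)\ra\Sigma X$. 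Naturality in the first variable is immediate. Naturality in the second variable, $j_{*}h^{*}\delta_X=(\Sigma j\,h)^{*}\delta_Y$, follows from the morphism of cosyzygy triangles defining $\Sigma j$, which gives $j_{*}\delta_X=(\Sigma j)^{*}\delta_Y$. Hence $\E(i,j)$ corresponds to $\underline{\C}(\underline{i},\Sigma\underline{j})$ under these isomorphisms, so one vanishes exactly when the other does.

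The second step is general $k$. We write $\E^{k}(i,j)=\E(i,\Sigma^{k-1}j)$ and apply the $k=1$ case to the pair $(i,\Sigma^{k-1}j)$. This gives vanishing if and only if $\underline{\C}(\underline{i},\Sigma\,\underline{\Sigma^{k-1}j})=0$. Finally, the identification $\Sigma\underline{x}=\underline{\Sigma x}$ from Section 2.4 and the relation $\Sigma^{k}(\J/\omega)=\Sigma^{k}\J/\omega$ yield $\underline{\Sigma^{k}j}=\Sigma^{k}\underline{j}$.

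The main obstacle is the well-definedness issue: $\Sigma j$ in $\C$ is only determined up to choices. We need the resulting map in $\underline{\C}$ to agree with the triangulated shift, and the isomorphism to be independent of the chosen cosyzygy triangles. I would handle this using the discussion in Section 2.2, which shows that $\E(A,\Sigma j)$ is independent of choices. For the stable side, any two choices of $\Sigma j$ differ by a map that is killed by $\delta_Y$ upon pullback, hence factors through $I(Y)\in\omega$. The remaining verifications are routine diagram chases.
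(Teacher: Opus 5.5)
Your proof is correct, but it takes a different route from the paper's. For $k=1$ the paper never writes down the isomorphism $\E(-,-)\cong\underline{\C}(-,\Sigma -)$; it works with triangles instead.

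For necessity, the paper completes an arbitrary $\underline{h}\colon B\to\Sigma X$ to a triangle, pulls it back along $\underline{i}$ and pushes it out along $\underline{j}$. Its laborious step is showing that this diagram of triangles is isomorphic to the image of a diagram of $\E$-triangles. That diagram is obtained by replacing a representative $g$ with the deflation $(g,p)$, where $p\colon P\to B$ is a projective deflation. The paper then transports the factorization criterion of Proposition~\ref{Ext=0 and factorization} ($\E(i,j)=0$ iff $j$ factors through $l$) down to $\underline{\C}$. For sufficiency, it passes an $\E$-triangle diagram to $\underline{\C}$ and obtains $\underline{j}=\underline{\lambda}\,\underline{l}$. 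It then lifts this to a genuine factorization of $j$ through $l$, using that $l$ is an inflation and objects of $\omega$ are injective.

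You replace all of this with the natural isomorphism $\underline{h}\mapsto h^{*}\delta_X$, read off from the exact sequence of Proposition~\ref{FactExact} for the cosyzygy triangle. You combine it with the identity $j_{*}\delta_X=(\Sigma j)^{*}\delta_Y$, which comes from the morphism of $\E$-triangles defining $\Sigma j$. This is shorter and avoids realizing stable triangles by $\E$-triangles. It also proves more: $\E(i,j)$ and $\underline{\C}(\underline{i},\Sigma\underline{j})$ are conjugate maps, not merely simultaneously zero. In effect you supply a proof of the remark $\E(-,-)=\underline{\C}(-,\Sigma -)$, which the paper states without proof at the end of Section~2.

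Because your $k=1$ statement concerns individual morphisms, your reduction of general $k$ to the pair $(i,\Sigma^{k-1}j)$ is cleaner than the paper's. The paper applies the $k=1$ case to ``$\Sigma\J$'', which is not literally an ideal of $\C$, although its $k=1$ argument is in fact per-morphism. The paper's route has the advantage of staying in the triangle-and-factorization language, and its realization trick reappears in the proof of Corollary~\ref{cor58}.
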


\begin{proof}
	Firstly, we prove the case for $k=1$.

{\bf Necessity.} Let $\underline{i}\colon A \ra B,\ \underline{j}\colon X \ra Y$ be morphisms in $\I/\omega$ and $\J/\omega$, respectively.
	Consider the following commutative diagram of triangles in $\underline{\C}$
	\begin{equation}
		\begin{array}{l} \label{diagram of triangles}
			$$\xymatrix{X \ar[r]^{\underline{f}} &E \ar[r]^{\underline{g}} &B \ar[r]^{\underline{h}} &\Sigma X\\
				X \ar[r]^{\underline{l}} \ar@{=}[u] \ar[d]^{\underline{j}} &F \ar[r]^{\underline{m}} \ar[u] \ar[d] &A \ar[r]^{\underline{n}} \ar[u]^{\underline{i}} \ar@{=}[d] &\Sigma X \ar@{=}[u] \ar[d]^{\Sigma\underline{j}}\\
				Y \ar[r]^{\underline{a}} &G \ar[r]^{\underline{b}} &A \ar[r]^{\underline{c}} &\Sigma Y.}$$
			
		\end{array}
	\end{equation}
	
	Claim: there exists a commutative diagram of $\E$-triangles such that the induced commutative of triangles is isomorphic to the diagram (\ref{diagram of triangles}) above.
	
	Take a deflation $p\colon P \ra B$, then $g^{\prime}=(g,p)\colon E \oplus P \ra B$ is also a deflation by Proposition \ref{cor3.16}. Thus we obtain an $\E$-triangle
	$$X^{\prime} \stackrel{f^{\prime}}{\lra} E^{\prime} \stackrel{g^{\prime}}{\lra} B \stackrel{\delta}{\dra},$$
	where $\underline{g^{\prime}}=\underline{g},\ E^{\prime}=E \oplus P$. Therefore, there exists an isomorphic commutative diagram
	$$\xymatrix{X \ar[r]^{\underline{f}} \ar@{-->}[d]_{\cong}  &E \ar[r]^{\underline{g}} &B \ar[r]^{\underline{h}} &\Sigma X \ar@{-->}[d]_{\cong}\\
		X^{\prime} \ar[r]^{\underline{f^{\prime}}}  &E^{\prime} \ar[r]^{\underline{g^{\prime}}} \ar@{=}[u] &B \ar[r]^{\underline{h^{\prime}}} \ar@{=}[u]  &\Sigma X^{\prime}.}$$	
Additionally, by the following diagram
	$$\xymatrix{X^{\prime} \ar[r]^{l^{\prime}} \ar@{=}[d] &F^{\prime} \ar[r]^{m^{\prime}} \ar[d] &A \ar@{-->}[r]^{i^{*}\delta} \ar[d]^{i}&\\
		X^{\prime} \ar[r]^{f^{\prime}} &E^{\prime} \ar[r]^{g^{\prime}} &B \ar@{-->}[r]^{\delta} &,}$$
	we obtain a commutative diagram between triangles
	$$\xymatrix{X \ar[r]^{\underline{l}} \ar@{=}[d] &F \ar[r]^{\underline{m}} \ar[d] &A \ar[r]^{\underline{n}} \ar[d]^{\underline{i}} &\Sigma X\ar@{=}[d]\\
		X \ar[r]^{\underline{f}} \ar[d]^{\cong} &E \ar[r]^{\underline{g}} \ar@{=}[d] &B \ar[r]^{\underline{h}} \ar@{=}[d] &\Sigma X\ar[d]^{\cong}\\
		X^{\prime} \ar[r]^{\underline{f^{\prime}}} \ar@{=}[d] &E^{\prime} \ar[r]^{\underline{g^{\prime}}} &B \ar[r]^{\underline{h^{\prime}}}  &\Sigma X^{\prime}\ar@{=}[d]\\ 	X^{\prime} \ar[r]^{\underline{l^{\prime}}} &F^{\prime} \ar[r]^{\underline{m^{\prime}}} \ar[u] &A \ar[r]^{\underline{n^{\prime}}} \ar[u]_{\underline{i}} &\Sigma X^{\prime}.}$$	
	Therefore we know that $\underline{n^{\prime}}=\underline{h^{\prime}i}\cong\underline{hi}=\underline{n}$ and there exists an isomorphism between triangles
	$$\xymatrix{X^{\prime} \ar[r]^{\underline{l^{\prime}}} \ar[d]_{\underline{x}}^{\cong} &F^{\prime} \ar[r]^{\underline{m^{\prime}}} \ar@{-->}[d]^{\cong} &A\ar[r]^{\underline{n^{\prime}}} \ar@{=}[d]& \Sigma X^{\prime} \ar[d]^{ \Sigma\underline{x}}_{\cong}\\
		X \ar[r]^{\underline{l}}  &F \ar[r]^{\underline{m}} &A \ar[r]^{\underline{n}}  &\Sigma X.}$$
	
For the morphism of $\E$-triangles
	$$\xymatrix{X^{\prime} \ar[r]^{l^{\prime}} \ar[d]^{jx} &F^{\prime} \ar[r]^{m^{\prime}} \ar[d] &A \ar@{-->}[r]^{i^{*}\delta} \ar@{=}[d] &\\
		Y \ar[r]^{a^{\prime}} &G^{\prime} \ar[r]^{b^{\prime}} &A \ar@{-->}[r] &,}$$
	we obtain a commutative diagram between triangles
	$$\small\xymatrix{Y \ar[r]^{\underline{a}}  &G \ar[r]^{\underline{b}} &A \ar[r]^{\underline{c}} \ar@{=}[d]& \Sigma Y \\
		X \ar[r]^{\underline{l}} \ar[u]^{\underline{j}} \ar[d]^{\cong} &F \ar[r]^{\underline{m}} \ar[u]\ar[d]^{\cong} &A \ar[r]^{\underline{n}} \ar@{=}[d] &\Sigma X \ar[u]_{\Sigma\underline{j}}\ar[d]^{\cong}\\
		X^{\prime} \ar[r]^{\underline{l^{\prime}}} \ar[d]_{\underline{jx}} &F^{\prime} \ar[r]^{\underline{m^{\prime}}} \ar[d] &A \ar[r]^{\underline{n^{\prime}}} \ar@{=}[d] &\Sigma X^{\prime}\ar[d]^{\Sigma(\underline{jx})}\\
		Y \ar[r]^{\underline{a^{\prime}}}&G^{\prime} \ar[r]^{\underline{b^{\prime}}} &A \ar[r]^{\underline{c^{\prime}}}&\Sigma Y.}$$
	$\underline{c^{\prime}}=\Sigma(\underline{jx})\underline{n^{\prime}}=\Sigma(\underline{j})\underline{n}=\underline{c}$.
	Thus we have the isomorphism
	$$\xymatrix{Y \ar[r]^{\underline{a}} \ar@{=}[d] &G \ar[r]^{\underline{b}} \ar@{-->}[d]^{\cong} &A\ar[r]^{\underline{c}} \ar@{=}[d]&\Sigma Y \ar@{=}[d]\\
		Y \ar[r]^{\underline{a^{\prime}}}  &G^{\prime} \ar[r]^{\underline{b^{\prime}}}  &A \ar[r]^{\underline{c^{\prime}}}  &\Sigma Y.}$$
	Therefore, the morphism of $\E$-triangles
	$$\xymatrix{X^{\prime} \ar[r]^{f^{\prime}} \ar@{=}[d] &E^{\prime} \ar[r]^{g^{\prime}} &B \ar@{-->}[r]^{\delta} &\\
		X^{\prime} \ar[r]^{l^{\prime}} \ar[d]^{jx} &F^{\prime} \ar[r]^{m^{\prime}}  \ar[u] \ar[d] &A \ar@{-->}[r]^{i^{*}\delta} \ar@{=}[d] \ar[u]^{i}&\\
		Y \ar[r]_{a^{\prime}} &G^{\prime} \ar[r]^{b^{\prime}} &A \ar@{-->}[r]^{(jx)_{*}i^{*}\delta} &}$$
	induces the following commutative diagram of triangles	
	$$\xymatrix{X^{\prime} \ar[r]^{\underline{f^{\prime}}} &E^{\prime} \ar[r]^{\underline{g^{\prime}}} &B \ar[r]^{\underline{h^{\prime}}} &\Sigma X^{\prime}\\
		X^{\prime} \ar[r]^{\underline{l^{\prime}}} \ar@{=}[u] \ar[d]^{\underline{jx}} &F^{\prime} \ar[r]^{\underline{m^{\prime}}} \ar[u] \ar[d] &A \ar[r]^{\underline{n^{\prime}}} \ar[u]^{\underline{i}} \ar@{=}[d] &\Sigma X^{\prime} \ar@{=}[u] \ar[d]^{\Sigma(\underline{jx})}\\
		Y \ar[r]^{\underline{a^{\prime}}} &G^{\prime} \ar[r]^{\underline{b^{\prime}}} &A \ar[r]^{\underline{c^{\prime}}} &\Sigma Y,}$$
	 which is isomorphic to the diagram (\ref{diagram of triangles}).

Since $\E(i,j)=0$, $jx$ factors through $l^{\prime}$ by Proposition \ref{Ext=0 and factorization}. Thus, $\underline{j}$ factors through $\underline{l}$. This means that $\underline{\C}(\underline{i},
\Sigma\underline{j})=0$.

{\bf Sufficiency.} Let $i\colon A \ra B,\ j\colon X \ra Y$ be morphisms in $\I$ and $\J$, respectively. For the morphisms of $\E$-triangles
$$\xymatrix{X \ar[r]^{f} \ar@{=}[d] &E \ar[r]^{g} &B \ar@{-->}[r] &\\
	X \ar[r]^{l} \ar[d]^{j} &F \ar[r]^{m}  \ar[u] \ar[d] &A \ar@{-->}[r] \ar@{=}[d] \ar[u]^{i}&\\
	Y \ar[r]^{a} &G \ar[r]^{b} &A \ar@{-->}[r] &,}$$
consider the induced commutative diagram of triangles
$$\xymatrix{X \ar[r]^{\underline{f}} \ar@{=}[d] &E \ar[r]^{\underline{g}} &B \ar[r]^{\underline{h}} &\Sigma X \ar@{=}[d]\\
	X \ar[r]^{\underline{l}} \ar[d]^{\underline{j}} &F \ar[r]^{\underline{m}}  \ar[u] \ar[d] &A \ar[r]^{\underline{n}} \ar@{=}[d] \ar[u]^{\underline{i}}&\Sigma X \ar[d]^{\Sigma\underline{j}}\\
	Y \ar[r]^{\underline{a}} &D \ar[r]^{\underline{b}} &A \ar[r]^{\underline{c}} &\Sigma Y.}$$

$\underline{\C}(\underline{i},
\Sigma\underline{j})=0$ implies that $\underline{j}$ factors through $\underline{l}$. That is to say, there exists a morphism $\underline{\lambda}\colon F \ra Y$ such that $\underline{j}=\underline{\lambda l}$. Thus $j-\lambda l$ factors through some object $W_{1}$ in $\omega$. Assume that $j-\lambda l=w_{1}w_{2}$ with $w_{1}\colon W_{1} \ra Y,\ w_{2}\colon X \ra W_{1}$. Then there is a morphism $w_{3}\colon F \ra W_{1}$ such that $w_{2}=w_{3}l$ by the property of injective objects.  $j-\lambda l=w_{1}w_{2}=w_{1}w_{3}l$ means that $j$ factors through $l$. Therefore, $\E(i,j)=0$ by Proposition \ref{Ext=0 and factorization}.

When $k=2$, $\E^{2}(\I,\J)=\E(\I,\Sigma\J)$, $\underline{\C}(\I/\omega,
\Sigma^{2}\J/\omega)=\underline{\C}(\I/\omega,
\Sigma(\Sigma\J)/\omega )$. Reverting to the case for $k=1$, we immediately derive that $\E(\I,\Sigma\J)=0$ if and only if $\underline{\C}(\I/\omega,
\Sigma(\Sigma\J)/\omega)=0$. Furthermore, $\E^{2}(\I,\J)=0$ if and only if $\underline{\C}(\I/\omega,
\Sigma^{2}\J/\omega)=0$.

Similarly, we can always reduce higher-order cases to lower-order ones, and ultimately simplify to the case for $k=1$.	
\end{proof}	

\begin{lemma} \label{if and only if}
Let	$\I, \J$ be ideals of $\C$, $\omega$ the subcategory consisting of projective-injective objects. If $\E^{k}(\I,\J)=0$ if and only if $\underline{\C}(\I/\omega,
\Sigma^{k}\J/\omega)=0$ for any positive integer $k$, then the following statements are true.
\begin{itemize}
	\item [\rm(1)] $\I=\bigcap\limits_{k=1}^n{^{\bot_{k}}\J}$ if and only if $\I/\omega=\bigcap\limits_{k=1}^n{^{\bot}}(\Sigma^{k}\J/\omega)$,
	\item [\rm(2)]
	$\J=\bigcap\limits_{k=1}^n{\I^{\bot_{k}}}$ if and only if $\J/\omega=\bigcap\limits_{k=1}^n{(\Omega^{k}\I/\omega)^{\bot}}$.
\end{itemize}
\end{lemma}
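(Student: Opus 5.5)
The plan is to turn the hypothesis, which is stated for ideals, into a statement about single morphisms. Then both sides of each equivalence describe the same condition read in $\C$ and in $\underline{\C}$. The approach uses the standing assumption that $\omega\subseteq{\rm Ob}(\I)$, and for (2) the analogous fact for $\J$. This is what makes the correspondence $\I\mapsto\I/\omega$ injective: a morphism $f$ of $\C$ lies in $\I$ if and only if $\underline{f}\in\I/\omega$. Indeed, if $\underline{f}=\underline{f'}$ with $f'\in\I$, then $f-f'$ factors through some $W\in\omega$, and $1_W\in\I$ gives $f\in\I$, exactly as in the necessity part of Lemma \ref{I-precover,I/w-precover}. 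One also checks that $\bigcap_k {^{\bot_k}\J}$ contains $[\omega]$, since $\E^{k}$ vanishes on projective-injectives. So both ideals being compared are ideals containing $\omega$, and equality of such ideals in $\C$ is equivalent to equality of their images in $\underline{\C}$.

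For (1), I will show that for a single morphism $i$ we have $i\in\bigcap_{k=1}^n{^{\bot_k}\J}$ if and only if $\underline{i}\in\bigcap_{k=1}^n{^{\bot}}(\Sigma^k\J/\omega)$. Apply the hypothesis (that is, Lemma \ref{Ext=0 if and only if C=0}) to the ideal generated by $i$, namely $\langle i\rangle$, in place of $\I$. Strictly, this needs $\omega\subseteq{\rm Ob}$, so I will apply it to $\langle i\rangle+[\omega]$. Adding $[\omega]$ changes neither side: $\E^k([\omega],\J)=0$ because objects of $\omega$ are projective, and $[\omega]$ becomes zero in $\underline{\C}$. We have $\E^k(\langle i\rangle,\J)=0$ if and only if $\E^k(i,\J)=0$, because $\E^k$ is a bifunctor and $\J^{\bot}$-type conditions are closed under composition. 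Likewise $\underline{\C}(\langle\underline{i}\rangle,\Sigma^k\J/\omega)=0$ if and only if $\underline{\C}(\underline{i},\Sigma^k\J/\omega)=0$. Hence $\E^k(i,\J)=0$ if and only if $\underline{\C}(\underline{i},\Sigma^k\J/\omega)=0$ for each $k$, and intersecting over $1\le k\le n$ gives the pointwise statement. Combining this with the injectivity above yields (1): if $\I=\bigcap{^{\bot_k}\J}$, take images to get the stable equality. Conversely, if the images agree, then a morphism $f$ lies in $\I$ iff $\underline f\in\I/\omega$ iff $\underline f$ lies in the stable intersection iff $f$ lies in $\bigcap{^{\bot_k}\J}$.

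For (2), the same pointwise argument applies with roles swapped, but the stable side is written as $(\Omega^k\I/\omega)^{\bot}$. I will use that $\Sigma$ is an automorphism of $\underline{\C}$ with inverse $\Omega$. This gives $\underline{\C}(\underline{i},\Sigma^k\underline{j})\cong\underline{\C}(\Omega^k\underline{i},\underline{j})$ naturally, so $\underline{\C}(\I/\omega,\Sigma^k\J/\omega)=0$ if and only if $\underline{\C}(\Omega^k\I/\omega,\J/\omega)=0$. Applying the hypothesis with $\J$ replaced by the ideal generated by a single $j$ plus $[\omega]$ then gives: $j\in\bigcap\I^{\bot_k}$ iff $\underline j\in\bigcap(\Omega^k\I/\omega)^{\bot}$. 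Then I conclude as before, using $\omega\subseteq{\rm Ob}(\J)$. If that is not a standing assumption, it must be derived: $\bigcap\I^{\bot_k}$ always contains $[\omega]$ by injectivity of $\omega$, so whenever the equality $\J=\bigcap\I^{\bot_k}$ is assumed or being proved, $[\omega]\subseteq\J$ is available in one direction. In the other direction, it must be read off from the stable equality together with the convention that $\J/\omega$ denotes the image of an ideal containing $[\omega]$.

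The main obstacle is this bookkeeping. First, the hypothesis is phrased for ideals rather than single morphisms, which forces the passage to principal ideals enlarged by $[\omega]$. Second, the lift from an equality in $\underline{\C}$ to one in $\C$ relies on both ideals containing $[\omega]$. I expect the identification $\Omega^k\I/\omega=\Sigma^{-k}(\I/\omega)$ and the naturality of the adjunction isomorphism to be routine.
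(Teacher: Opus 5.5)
Your proposal is correct and follows essentially the paper's route: you transfer the orthogonality condition morphism-by-morphism between $\C$ and $\underline{\C}$, then lift the stable equality back using $\omega\subseteq{\rm Ob}(\I)$, while the paper applies the hypothesis directly to single morphisms and calls (2) dual. Your extra bookkeeping makes explicit what the paper's proof uses silently, namely applying Lemma~\ref{Ext=0 if and only if C=0} to $\langle i\rangle+[\omega]$ and noting that the converse in (2) needs $[\omega]\subseteq\J$.
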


\begin{proof}
We just need prove that (1) since (2) is dual.

{\bf Necessity.}  $\underline{\C}(\I/\omega,
\Sigma^{k}\J/\omega)=0$ directly implies $\I/\omega\subseteq\bigcap\limits_{k=1}^n{^{\bot}}(\Sigma^{k}\J/\omega)$. For any morphism $\underline{i}\in\bigcap\limits_{k=1}^n{^{\bot}}(\Sigma^{k}\J/\omega),\ \underline{\C}(\underline{i},
\Sigma^{k}\J/\omega)=0$ shows that $\E^{k}(i,\J)=0$. So we have $i \in \bigcap\limits_{k=1}^n{^{\bot_{k}}\J}=\I$ and then $\underline{i} \in \I/\omega$. That is $\bigcap\limits_{k=1}^n{^{\bot}}(\Sigma^{k}\J/\omega)\subseteq\I/\omega$.

{\bf Sufficiency.}  $\underline{\C}(\I/\omega,
\Sigma^{k}\J/\omega)=0$ implies that $\E^{k}(\I,\J)=0$ which directly shows that  $\I\subseteq\bigcap\limits_{k=1}^n{^{\bot_{k}}\J}$. For any morphism $i \in \bigcap\limits_{k=1}^n{^{\bot_{k}}\J}$, $\E^{k}(i,\J)=0$ implies that $\underline{\C}(\underline{i},
\Sigma^{k}\J/\omega)=0$. Therefore $\underline{i} \in \bigcap\limits_{k=1}^n{^{\bot}}(\Sigma^{k}\J/\omega)=\I/\omega$. Then $i \in \I$ and $\bigcap\limits_{k=1}^n{^{\bot_{k}}\J}\subseteq\I$.
\end{proof}

Now we state and prove our main result.

\begin{theorem} \label{main result of ideal n-cotorsion pair}
Let $(\C,\E,\s)$ be a Frobenius extriangulated category, $\I,\J$ two ideals of $\C$ and $\omega$ the subcategory consisting of projective-injective objects. Then $(\I,\J)$ is an ideal $n$-cotorsion pair in $\C$ if and only if $(\I/\omega,\J/\omega)$ is an ideal $n$-cotorsion pair in $\underline{\C}.$
\end{theorem}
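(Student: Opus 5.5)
The plan is to verify the three defining conditions of an ideal $n$-cotorsion pair one at a time, transporting each across the quotient functor $\C\to\underline{\C}$ by means of Lemmas \ref{I-precover,I/w-precover}, \ref{Ext=0 if and only if C=0} and \ref{if and only if}.

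We work under the standing assumption $\omega\subseteq{\rm Ob}(\I)$. We first record that this forces $\omega\subseteq{\rm Ob}(\J)$ whenever either side is an ideal $n$-cotorsion pair. In $\C$: for $W\in\omega$ we have $\E^{k}(\I,1_W)=\E(\Omega^{k-1}\I,1_W)=0$ since $W$ is injective, so $1_W\in\bigcap_k\I^{\bot_k}=\J$. In $\underline{\C}$ the objects of $\omega$ are zero, so the statement is trivially true there. Consequently $\I$ and $\J$ are both saturated with respect to $[\omega]$: if $\underline{f}\in\I/\omega$, then $f=f'+w$ with $f'\in\I$ and $w\in[\omega]\subseteq\I$, so $f\in\I$. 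This makes $\I\mapsto\I/\omega$ injective and lets us pass freely between $i\in\I$ and $\underline{i}\in\I/\omega$, which is exactly the form used in Lemma \ref{if and only if}.

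\textbf{Condition (3).} Lemma \ref{I-precover,I/w-precover}(1) gives that $\I$ is precovering if and only if $\I/\omega$ is. For $\J$ we apply Lemma \ref{I-precover,I/w-precover}(2). That lemma is stated for ideals containing $\omega$, and its proof only uses that the injective inflation $M\to I(M)$ lies in the ideal; this holds for $\J$ once $\omega\subseteq{\rm Ob}(\J)$ is known.

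\textbf{Conditions (1) and (2).} Lemma \ref{Ext=0 if and only if C=0} supplies exactly the hypothesis of Lemma \ref{if and only if}, namely $\E^{k}(\I,\J)=0$ if and only if $\underline{\C}(\I/\omega,\Sigma^k\J/\omega)=0$. Lemma \ref{if and only if} then converts $\I=\bigcap_{k=1}^{n}{}^{\bot_k}\J$ and $\J=\bigcap_{k=1}^{n}\I^{\bot_k}$ into the corresponding equalities in $\underline{\C}$. In the triangulated category $\underline{\C}$ we have $\E^{k}(-,-)=\underline{\C}(-,\Sigma^k-)$, and $\Sigma$ is an automorphism with $\Omega=\Sigma^{-1}$. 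Hence
\[
{}^{\bot}(\Sigma^k\J/\omega)={}^{\bot_k}(\J/\omega)
\quad\text{and}\quad
(\Omega^k\I/\omega)^{\bot}=(\I/\omega)^{\bot_k},
\]
the second because $\underline{\C}(\Omega^k\underline{i},\underline{j})\cong\underline{\C}(\underline{i},\Sigma^k\underline{j})$. So these are precisely the orthogonality conditions of an ideal $n$-cotorsion pair in $\underline{\C}$.

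\textbf{Main obstacle.} Lemma \ref{if and only if} and Lemma \ref{Ext=0 if and only if C=0} are phrased for the whole ideals, whereas the orthogonality conditions need the statement for single morphisms: $\E^k(i,\J)=0$ if and only if $\underline{\C}(\underline{i},\Sigma^k\J/\omega)=0$. We would get this by noting that the proof of Lemma \ref{Ext=0 if and only if C=0} is morphism-by-morphism, then apply it to the ideal generated by $i$ together with $[\omega]$. Likewise, for the sufficiency direction we must make sure that an arbitrary $\underline{i}\in{}^{\bot}(\Sigma^k\J/\omega)$ lifts to some $i$, and that membership is independent of the chosen lift. This holds because $[\omega]$ is killed by $\E^k$ on either side, as $\omega$ consists of projective-injective objects.
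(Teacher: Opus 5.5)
Your overall route is the paper's: its proof just combines Lemmas \ref{Ext=0 if and only if C=0}, \ref{if and only if} and \ref{I-precover,I/w-precover}. Your identifications ${}^{\bot}(\Sigma^{k}\J/\omega)={}^{\bot_k}(\J/\omega)$ and $(\Omega^{k}\I/\omega)^{\bot}=(\I/\omega)^{\bot_k}$ in $\underline{\C}$, together with the morphism-wise reading of Lemma \ref{Ext=0 if and only if C=0}, make explicit what the paper leaves tacit.

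One step fails, however: the claim that $\omega\subseteq{\rm Ob}(\J)$ is ``trivially true'' when the hypothesis is on the stable side. That $\underline{1_W}=0$ lies in $\J/\omega$ says nothing about whether $1_W\in\J$, because $\J/\omega$ only remembers $\J+[\omega]$. In the direction $\underline{\C}\Rightarrow\C$ you genuinely need $[\omega]\subseteq\J$, in two places. First, to pass from $\J/\omega=\bigcap_k(\Omega^k\I/\omega)^{\bot}$ to $\J=\bigcap_k\I^{\bot_k}$; the right-hand side always contains $[\omega]$, as you observed. Second, for the sufficiency half of Lemma \ref{I-precover,I/w-precover}(2), which needs the inflation $M\to I(M)$ to lie in $\J$.

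This inclusion cannot be derived, because without it the implication is false. Take $\I={\rm Mor}(\C)$, which satisfies $\omega\subseteq{\rm Ob}(\I)$, and $\J=0$. Then $(\I/\omega,\J/\omega)=({\rm Mor}(\underline{\C}),0)$ is an ideal $n$-cotorsion pair in $\underline{\C}$; this is the paper's example applied to the triangulated category $\underline{\C}$, whose injectives are zero. In $\C$, however, the same example gives $\bigcap_{k=1}^{n}{\rm Mor}(\C)^{\bot_k}=[\omega]$. This is nonzero as soon as $\omega$ has a nonzero object, e.g.\ in ${\rm mod}\,k[x]/(x^2)$, so $(\I,\J)$ is not an ideal $n$-cotorsion pair in $\C$.

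So $\omega\subseteq{\rm Ob}(\J)$ must be an additional hypothesis. The paper's one-line proof tacitly uses it when it invokes the $\J$-halves of Lemmas \ref{I-precover,I/w-precover} and \ref{if and only if}, so its standing convention has to be read as applying to both ideals. Once you assume it and drop the claim that it is automatic, the rest of your argument goes through. In the direction $\C\Rightarrow\underline{\C}$ both inclusions are indeed automatic, since $\bigcap_k{}^{\bot_k}\J$ and $\bigcap_k\I^{\bot_k}$ always contain $[\omega]$.
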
	

\begin{proof}
This follows from Lemma \ref{Ext=0 if and only if C=0} and Lemma \ref{if and only if},
Lemma \ref{I-precover,I/w-precover}.
\end{proof}	

As a special case of Theorem \ref{main result of ideal n-cotorsion pair}, we obtain the following result.

\begin{corollary}{\rm\cite[Corollary 1.2]{STWZ}}
Let $(\C,\E,\s)$ be a Frobenius exact category. When $n=1$, $(\I,\J)$ is an ideal cotorsion pair in $\C$ if and only if $(\I/\omega,\J/\omega)$ is an ideal cotorsion pair in $\underline{\C}$.
\end{corollary}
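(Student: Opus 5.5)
The statement to prove is the Corollary: for a Frobenius exact category $\C$ and $n=1$, $(\I,\J)$ is an ideal cotorsion pair in $\C$ if and only if $(\I/\omega,\J/\omega)$ is an ideal cotorsion pair in $\underline{\C}$. The plan is to obtain this by specializing Theorem \ref{main result of ideal n-cotorsion pair} to $n=1$.

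First, every exact category is extriangulated, with $\E=\mathrm{Ext}^1$ and conflations the admissible short exact sequences. A Frobenius exact category is therefore a Frobenius extriangulated category, and its subcategory $\omega$ of projective-injectives is the same in both senses. The standing hypotheses of Section 4, namely $\omega\subseteq{\rm Ob}(\I)$ and the dual condition on $\J$ needed for Lemma \ref{I-precover,I/w-precover}(2), are to be carried over as in \cite{STWZ}.

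Next, identify the notions at $n=1$. By the remark after the definition of ideal $n$-cotorsion pairs, an ideal $1$-cotorsion pair in an extriangulated category with enough projectives and injectives is a complete ideal cotorsion pair. More precisely, the precovering/preenveloping condition (3), together with \cite[Lemma 4.5]{ZH}, upgrades to special precovering/preenveloping. In $\underline{\C}$, \cite[Theorem 5.3.4]{BM} shows that precovering and special precovering coincide, so being an ideal $1$-cotorsion pair there is the same as being a (complete) ideal cotorsion pair.

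With these identifications, Theorem \ref{main result of ideal n-cotorsion pair} at $n=1$ gives the equivalence directly. This rests on Lemma \ref{Ext=0 if and only if C=0} at $k=1$ and Lemma \ref{if and only if}, which translate the orthogonality conditions, and on Lemma \ref{I-precover,I/w-precover}, which transfers precovering and preenveloping.

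The main obstacle is terminological rather than technical. The phrase ``ideal cotorsion pair'' in \cite{STWZ} must be read as the complete version, i.e.\ including the approximation conditions, so that it matches the ideal $1$-cotorsion pairs defined here. Under the purely orthogonal reading the corollary would still hold, via Lemmas \ref{Ext=0 if and only if C=0} and \ref{if and only if} alone, without Lemma \ref{I-precover,I/w-precover}. I would check which reading \cite{STWZ} uses and state the corollary accordingly.
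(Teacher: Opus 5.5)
Your plan follows the paper's own argument: the corollary is just Theorem~\ref{main result of ideal n-cotorsion pair} at $n=1$, with the Frobenius exact category viewed as a Frobenius extriangulated one. Your point that one also needs $\omega\subseteq{\rm Ob}(\J)$ (automatic when $(\I,\J)$ is an ideal cotorsion pair in $\C$) is fair, and the paper leaves it implicit.

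One correction to your aside on the ``complete'' reading. \cite[Lemma 4.5]{ZH} only says that $\I$ is special precovering if and only if $\J$ is special preenveloping, so it does not upgrade precovering to special precovering. In the Frobenius setting that upgrade is the $(2)\Rightarrow(1)$ argument of Corollary~\ref{cor58}: it takes a special $\I/\omega$-precover given by \cite[Theorem 5.3.4]{BM} and lifts it back to $\C$.
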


As an application of Theorem~\ref{main result of ideal n-cotorsion pair} in the case $n=1$, we obtain the following result for complete ideal cotorsion pairs.

\begin{corollary}\label{cor58}
Let $(\C,\E,\s)$ be a Frobenius extriangulated category, $(\I,\J)$ an ideal cotorsion pair, and $\omega$ the class of projective-injective objects.
Then the following statements are equivalent.
\begin{itemize}
	\item [\rm(1)] $\I$ is special precovering in $\C$;
	\item [\rm(2)] $\I$ is precovering in $\C$;
	\item [\rm(3)] $\J$ is special preenveloping in $\C$;
	\item [\rm(4)] $\J$ is preenveloping in $\C$;
	\item [\rm(5)] $(\I,\J)$ is a complete ideal cotorsion pair in $\C$;
	\item [\rm(6)] $(\I/\omega,\J/\omega)$ is a complete ideal cotorsion pair in $\underline{\C}$.
\end{itemize}
\end{corollary}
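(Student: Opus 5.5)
The plan is to prove the equivalences as a cycle $(1)\Rightarrow(2)\Rightarrow(6)\Rightarrow(5)\Rightarrow(1)$, and to join $(3)$ and $(4)$ to it by the dual cycle through $(5)$ and $(6)$.

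The implication $(1)\Rightarrow(2)$ is trivial, since a special $\I$-precover is in particular an $\I$-precover. For $(2)\Rightarrow(6)$ we work in the stable category $\underline{\C}$. By Lemma \ref{Ext=0 if and only if C=0} together with Lemma \ref{if and only if} for $n=1$, the orthogonality equalities $\I={^{\bot_1}\J}$ and $\J=\I^{\bot_1}$ transfer to $(\I/\omega,\J/\omega)$, so $(\I/\omega,\J/\omega)$ is an ideal cotorsion pair in $\underline{\C}$. By Lemma \ref{I-precover,I/w-precover}(1), $\I/\omega$ is precovering. Since $\underline{\C}$ is triangulated, \cite[Theorem 5.3.4]{BM} (see the Remark after the definition of complete ideal cotorsion pairs) then gives that $\I/\omega$ is special precovering and $\J/\omega$ is special preenveloping. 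Hence $(\I/\omega,\J/\omega)$ is complete.

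For $(6)\Rightarrow(5)$, completeness in $\underline{\C}$ gives that $\I/\omega$ is precovering and $\J/\omega$ is preenveloping. Lemma \ref{I-precover,I/w-precover} lifts these properties, so $\I$ is precovering and $\J$ is preenveloping in $\C$; that is, $(\I,\J)$ is an ideal $1$-cotorsion pair. Theorem \ref{main result of ideal n-cotorsion pair} with $n=1$ packages the same conclusion. What remains is to upgrade ``precovering'' to ``special precovering'' in $\C$.

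This upgrade is the main obstacle. First I would try to lift a special $\I/\omega$-precover directly. Given $M$, take an $\I$-precover $f\colon X\to M$, and replace it by $f'=(f\ p)\colon X\oplus P\to M$ with $p$ a projective deflation, exactly as in the proof of Lemma \ref{I-precover,I/w-precover}. Then $f'$ is a deflation lying in $\I$, because $\omega\subseteq{\rm Ob}(\I)$. Let $K\to X\oplus P\xrightarrow{f'}M\overset{\delta}{\dashrightarrow}$ be its $\E$-triangle. I would then show the diagram condition via Proposition \ref{Ext=0 and factorization}. Take a morphism $h$ into $K$, or better, realize $\delta$ as $h_*\lambda$ with $h\in\I^{\bot_1}$. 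A natural choice is $\lambda$ coming from a projective presentation of $M$, with $h$ the induced map $\Omega M\to K$. The claim $\E(i,h)=0$ for every $i\in\I$ reduces, by Proposition \ref{Ext=0 and factorization}, to showing that each $i\colon A\to M'$ in $\I$, pulled back, factors through $f'$; this holds by the precover property. The required extensions are handled using that $\Omega$ and $\Sigma$ are inverse in the Frobenius setting.

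If this direct construction becomes messy, the fallback is \cite[Lemma 4.5]{ZH}, combined with the stable-category argument and the triangulated equivalence of precovering and special precovering. Finally, $(5)\Rightarrow(1)$ and $(5)\Rightarrow(3)$ hold by definition, $(3)\Rightarrow(4)$ is trivial, and $(4)\Rightarrow(6)$ is the dual of $(2)\Rightarrow(6)$, using Lemma \ref{I-precover,I/w-precover}(2). Since $(3)\Leftrightarrow(1)$ by \cite[Lemma 4.5]{ZH}, the cycle closes.
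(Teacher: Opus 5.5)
Your cycle of implications is logically complete. On the stable-category side you agree with the paper: $(2)\Rightarrow(6)$ goes through Lemmas~\ref{Ext=0 if and only if C=0}, \ref{if and only if} and \ref{I-precover,I/w-precover}, together with \cite[Theorem 5.3.4]{BM}. The genuine difference is how you upgrade ``precovering'' to ``special precovering'' in $\C$.

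The paper goes down to $\underline{\C}$ and back up. It takes a special $\I/\omega$-precover in $\underline{\C}$ with its homotopy cartesian square and realizes it by $\E$-triangles in $\C$ after adding projective summands. It then transfers $\underline{g}\in(\I/\omega)^{\bot_1}$ to $k'\in\I^{\bot_1}$ via Lemma~\ref{Ext=0 if and only if C=0}.

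Your direct construction never leaves $\C$. It is shorter and avoids realizing homotopy cartesian squares. It also proves more: in a Frobenius extriangulated category, every precovering ideal containing $[\omega]$ is special precovering, independently of $\J$. The paper's route instead keeps the whole argument inside its ``detect everything in $\underline{\C}$'' framework and reuses \cite{BM}. Once the direct argument is written out, your fallback paragraph is unnecessary.

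The step you must make precise is the verification $h\in\I^{\bot_1}$. As written (``each $i\colon A\to M'$ in $\I$, pulled back, factors through $f'$''), it only makes sense when the codomain is $M$ and the extension is $\lambda$. Here is the argument.
\begin{itemize}
\item Realize $\lambda$ by $\Omega M\to P\xrightarrow{p}M$ with $P\in\omega$.
\item Since $P$ is injective, $\E(B,P)=0$, so $\lambda_\sharp\colon\C(B,M)\to\E(B,\Omega M)$ is surjective for every $B$.
\item Take $i\colon A\to B$ in $\I$ and $\delta'\in\E(B,\Omega M)$, and write $\delta'=u^{*}\lambda$ with $u\colon B\to M$. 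Then $h_{*}i^{*}\delta'=(ui)^{*}h_{*}\lambda=(ui)^{*}\delta$.
\item Since $ui\in\I$, it factors as $f't$ through the $\I$-precover $f'$. As $(f')^{*}\delta=0$, we get $(ui)^{*}\delta=0$.
\end{itemize}
That $f'\in\I$ uses $p\in\I$, which holds because $\E(p,-)=0$ and $\I={^{\bot_1}\J}$. This surjectivity is exactly where ``$\Omega$ and $\Sigma$ are inverse'' enters.

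Finally, under the paper's definition a special precover is only required to be a deflation in $\I$ fitting into the displayed diagram. So $(1)\Rightarrow(2)$ is not automatic: it needs the one-line check $g^{*}h_{*}\lambda=h_{*}g^{*}\lambda=0$ for $g\in\I$, which is \cite[Proposition 3.7]{ZH}.
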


\begin{proof}
(1) $\Rightarrow$ (2) follows from \cite[Proposition 3.7]{ZH}.

(2) $\Rightarrow$ (1)  $\I$ is precovering in $\C$ if and only if $\I/\omega$ is precovering in $\underline{\C}$ by Lemma \ref{I-precover,I/w-precover} if and only if $\I/\omega$ is special precovering in $\underline{\C}$ by \cite[Theorem 5.3.4]{BM}. For every object $C \in \C$, there exists a special $\I/\omega$-precover $\underline{f}\colon X \ra C$ and a homotopy cartesian diagram
$$\xymatrix{A \ar[r] \ar[d]^{\underline{g}} &B \ar[r]^{\underline{ft}} \ar[d]^{\underline{t}} &C \ar[r] \ar@{=}[d] &\Sigma A \ar[d]\\
	Y \ar[r]  &X \ar[r]^{\underline{f}}  &C \ar[r]  &\Sigma Y}$$
with $\underline{g}\in (\I/\omega)^{\bot_{1}}$. It follows from the proof of Lemma \ref{I-precover,I/w-precover} that there exists an $\I$-precover $f^{\prime}\colon X^{\prime}\ra C$ with $\underline{f^{\prime}}=\underline{f}$ and an $\E$-triangle $Y^{\prime} \lra X^{\prime} \stackrel{f^{\prime}}{\lra} C \dra$ such that the induced triangle is isomorphic to the triangle $Y \lra X \stackrel{\underline{f}}{\lra} C \lra \Sigma Y$, i.e.,
$$\xymatrix{Y^{\prime} \ar[r] \ar[d]^{\cong} &X^{\prime} \ar[r]^{\underline{f^{\prime}}} \ar@{=}[d] &C \ar[r] \ar@{=}[d] &\Sigma Y^{\prime} \ar[d]^{\cong}\\
	Y \ar[r] &X \ar[r]^{\underline{f}} &C \ar[r] &\Sigma Y,
}$$
where $X^{\prime}=X\oplus P$. Note that $X^{\prime} =X$ in $\underline{\C}$.
For the composition $t_{1}\colon B \stackrel{t}{\ra} X \ra X^{\prime}$, there is a deflation $p_{1}\colon P_{1} \ra X^{\prime}$ such that $t^{\prime}=(t_{1}\ p_{1})\colon B^{\prime}=B \oplus P_{1} \ra X^{\prime}$ is a deflation, then we obtain a commutative diagram
$$\xymatrix{A^{\prime} \ar[r] \ar@{-->}[d]^{k^{\prime}} &B^{\prime} \ar[r]^{f^{\prime}t^{\prime}} \ar[d]^{t^{\prime}} &C \ar@{-->}[r]^{\delta^{\prime}} \ar@{=}[d]& \\
	Y^{\prime} \ar[r]  &X^{\prime} \ar[r]^{f^{\prime}}  &C \ar@{-->}[r]^{\delta} &.}$$
Thus, this yields an induced commutative diagram between triangles
$$\xymatrix{A^{\prime} \ar[r] \ar[d]^{\underline{k^{\prime}}} &B^{\prime} \ar[r]^{\underline{f^{\prime}t^{\prime}}} \ar[d]^{\underline{t^{\prime}}=\underline{t}} &C \ar[r] \ar@{=}[d] &\Sigma A^{\prime} \ar[d]^{\cong}\\
	Y^{\prime} \ar[r] &X^{\prime} \ar[r]^{\underline{f^{\prime}}=\underline{f}} &C \ar[r] &\Sigma Y^{\prime}.
}$$

Therefore, $\underline{k^{\prime}}\cong \underline{g} \in (\I/\omega)^{\bot_{1}}$. Finally, for any $i\in \I$, $\underline{\C}(\underline{i},\Sigma\underline{k^{\prime}})=\E(\underline{i},\underline{k^{\prime}})=0$ in $\underline{\C}$. Thus $\E(i,k^{\prime})=0$ by Lemma \ref{Ext=0 if and only if C=0}, which implies $k^{\prime}\in \I^{\bot_{1}}$.
This shows that $\I$ is special precovering.

$(3)\Leftrightarrow(4)$ is similar to $(1)\Leftrightarrow(2)$.

$(1)\Leftrightarrow(3)$ follows from \cite[Lemma 4.5]{ZH}.

$(1)\Leftrightarrow(5)$ is obvious by the definition.

$(5)\Leftrightarrow(6)$ follows from Theorem \ref{main result of ideal n-cotorsion pair} when $n=1$.
\end{proof}

\begin{remark}
Recall that every Frobenius exact category can be regarded as a Frobenius extriangulated category. Thus, Corollary~\ref{cor58} recovers \cite[Theorem~1.1]{STWZ}.
\end{remark}

\textbf{Yixia Zhang}\\
School of Mathematics and Statistics, Changsha University of Science and Technology, 410114 Changsha, Hunan,  P. R. China\\
E-mail: yxzhangmath@163.com\\[0.3cm]
\textbf{Panyue Zhou}\\
School of Mathematics and Statistics, Changsha University of Science and Technology, 410114 Changsha, Hunan,  P. R. China\\
E-mail: panyuezhou@163.com

\end{document}